\definecolor{SpringGreen}{rgb}{0.0, 1.0, 0.5}
 \newtheorem{thm}{Theorem}[section]
 \newtheorem{cor}[thm]{Corollary}
 \newtheorem{lem}[thm]{Lemma}
 \newtheorem{prop}[thm]{Proposition}
 \newtheorem{prp}[thm]{Proposition}
 \theoremstyle{definition}
 \newtheorem{dfn}[thm]{Definition}
 \theoremstyle{remark}
 \newtheorem{rem}[thm]{Remark}
 \numberwithin{equation}{section}
 \newtheorem{notation}[thm]{\textbf{Notation}}
\newcommand{\2}{``}
\DeclareMathOperator{\Ric}{Ric}
\DeclareMathOperator{\grad}{grad}
\DeclareMathOperator{\trace}{trace}
\DeclareMathOperator{\diver}{div}
\newcommand{\swp}{\emph{wp}}
\newcommand{\mwp}{\emph{mwp}}
\newcommand{\gK}{\emph{gK}}
 \newcommand{\vect}[1]{\boldsymbol{#1}}
\begin{document}

\bigskip


\title[Einstein mwp and gK with multdim base]{Einstein multiply warped products and generalized Kasner manifolds with multidimensional base}

\author[Dobarro]{Fernando Dobarro}

\address[Dobarro]{
Instituto de Ciencias Polares, Ambiente y Recursos Naturales,
Universidad Nacional de Tierra del Fuego, Ant\'artida e Islas del Atl\'antico Sur,
9410 Ushuaia,
Fuegia Basket 251,
Argentina.\\ \texttt{fdobarro@untdf.edu.ar}
}
%
%
%
\author[Rey]{Carolina Rey}
\address[Rey]{Departamento de Matem\'atica,
Universidad T\'ecnica Federico Santa Mar\'ia,
2390123 Valpara\'iso,
Avenida España 1680,
Chile. \\ \texttt{carolina.reyr@usm.cl}}

\subjclass{53C25; 53C50; 83C15}

\keywords{Curvature, Warped Product, Kasner metrics, Einstein Manifolds}




\begin{abstract}
The purpose of this paper is to provide conditions for the existence or non existence of non trivial Einstein multiply warped products,  specially of generalised Kasner type; as well as to show estimates of the Einstein parameter that condition the existence of such metrics.
\end{abstract}

\maketitle

\tableofcontents


\section{Introduction}
\label{sec:introduction}

Since several decades, warped Riemannian and semi-Riemannian metrics play a relevant role in geometry and physics, and more recently, a fundamental role in information theory and statistics.

Given two semi-Riemannian manifolds $(B, g_B)$ and $(F, g_F)$ of dimensions $n$ and $k$ respectively, and a smooth function $b \in C^\infty_{>0}(B):=\{f:B \rightarrow (0,+\infty)\textrm{ such that } f \textrm{ is }C^\infty\}$, the (singly) warped product (for short \swp) $B \times_{b} F$ is the product manifold $B \times F$ with the metric tensor $g := g_B \oplus b^2g_{F}$.  Bishop and O'Neill first introduced this concept to create a class of complete Riemannian manifolds with negative sectional curvature \cite{Bishop-O'Neill1969}. As was mentioned in \cite{B-YChen2017}, before the Bishop and O'Neill article, the concept of \swp~ arose in the mathematical and physical literature as semi-reducible spaces in \cite{Kruchkovich1957}.

\noindent The \swp s have become increasingly important in geometry due to their ability to generate differentiable manifolds furnished with curvatures with particular properties, see among many others
\cite{Besse2008,Case2010,B-YChen2017,dobarro-lamidozo1987,Kim-Kim2003,Klingenberg1978,Lancini2018PHD,ONeill1983, Petersen2006,Rimoldi2010,Unal2000PHD}.

\medskip

The application of $\swp$s has proven to be invaluable in Theoretical Physics, particularly in general relativity, cosmological models, black holes, string theory, Kaluza-Klein theory, extra-dimensions theory, quantum-gravity, etc. Among the several warped semi-Riemannian metrics applied in these fields, we mention those of
Schwarzschild, Robertson-Walker, Friedmann-Robertson-Walker, Reissner-Nordstr\"om, Kasner space-times and Ba\~nados-Teitelboim-Zanelli, de Sitter and anti-de Sitter black hole solutions. See, for instance, \cite{B-YChen2011,B-YChen2017,Beem-Ehrlich-Easley1996,Hawking-Ellis1973,ONeill1983,Allison-Unal2003,Choi-Kim2004,Carot-da Costa 93,dobarro-unal2009,Frolov2001,Dumitru2011,Dumitru2016,Bhunia2022,Neves2023,Guler2022} and references therein.

As was mentioned above, recently a special type of warped metrics has gained relevance in information theory
and statistics, with a focus on their applications
in statistical inference and statistical learning. The base conformal warped products introduced in
\cite{Dobarro-Unal2008a,Dobarro-Unal2007} joint to multiply warped products were found to be related to the Mahalanobis distance, Rao-Fisher metrics and Sasakian statistical manifolds; see, for instance, \cite{Said-Bombrum-Berthomieu2019,Furuhata-Hasegawa-Okuyama-Sato2017,Bensadon2015,Bensadon2016,McLachla2004} and references therein.

Semi-Riemnnian metrics satisfying $\text{Ric} = \lambda g$ for some constant $\lambda$ are called \textit{Einstein metrics}, and manifolds admitting such metrics \textit{Einstein manifolds}, see \cite{Besse2008,ONeill1983,Petrov1969,Stephani-Kramer-Maccallum-Hoenselaers-Herlt2003,Deshmukh-2023}.
The question of the existence of a compact Einstein \swp~ with a non-constant warping function was raised by Besse \cite{Besse2008}.

\medskip

In \cite{Kim-Kim2003} D-S. Kim and Y. H. Kim proved that an Einstein \swp~ space of Riemannian manifolds with non-positive scalar curvature and compact base is simply a Riemannian product.
S. Kim \cite{KimS2006} provided examples of
compact Riemannian manifolds with positive scalar curvature that cannot be considered as the basis of any
non-trivial Einstein \swp~ of Riemannian manifolds.

\textit{Multiply warped product} (\mwp, for short) is a generalization of \swp s introduced by \"{U}nal \cite{Unal2000PHD}
in order to consider multifiber products and deal with the study of several semi-Riemannian metrics
of origin in physics. The $\mwp$s have been studied extensively as Einstein manifolds (see, e.g., \cite{Dobarro-Unal2005,B-YChen2011,B-YChen2017,Karaca2020,Pal-Kumar2019,Bhunia2022}). In  \cite{Dobarro-Unal2005},  Dobarro and \"{U}nal generalized the well-known Kasner space-times in relativity (see for instance \cite{Kasner21,Harvey1989,Harvey1990,Petrov1969,Misner-Wheeler-Thorne1973,Stephani-Kramer-Maccallum-Hoenselaers-Herlt2003}), to a family of \mwp s where the basis $B$ is an interval with metric $-dt^2$, the warping functions are powers of a function in $C^\infty_{>0}(B)$ and more general conditions than those considered for the classical Kasner. See also \cite{Frolov2001,Halpern2001,Kokarev1995} for other generalizations of the Kasner metrics.
In this paper, we expand this idea to $\mwp$ with a $n$-dimensional base manifold and call them {\it generalized Kasner semi-Riemannian manifold} (\gK, for short). Particularly interesting is the discussion arising in the Harvey article \cite{Harvey1990} about the precise definition of Kasner metrics, compared to \cite{Kasner21,Kasner1921a,Petrov1969}.
We would also remark that, in special cases, the Mahalanobis and Rao-Fisher metrics take the form of generalized Kasner metrics.

\medskip

In \cite{Dobarro-Unal2005}, the authors obtained the relationships between the curvature of a given \mwp~ and those of its base and its fibers. Based on them, here we analyze conditions for the existence or nonexistence of Einstein \mwp s and Einstein generalized Kasner manifolds. We deal in particular with the case where the dimension of the base is $\geq 2$.  Our approach is inspired by the works \cite{Kim-Kim2003} and \cite{KimS2006}, developed for singly \swp s.

\noindent In \cite{Pal-Kumar2019} the authors showed that it is not possible to construct nontrivial Einstein \mwp s with compact Riemannian base and negative scalar curvature, and applied their results to Generalized Robertson-Walker
space-time and Generalized Friedmann-Robertson-Walker space-time.

\noindent Although our results are close to those of Pal-Kumar, they differ, not only in the conclusions, but also in the fact that we require fewer hypotheses to obtain them, see our Remark \ref{rem:Pal-Kumar-results}.

 In addition to providing examples illustrating the absence of non-trivial Einstein \mwp s, both of general and generalised Kasner type, it is also our aim to show estimates and properties of the Einstein parameter that condition the existence of such metrics.

Considering the structure of the Ricci tensor of a \mwp ~obtained in \cite{Dobarro-Unal2005}, in Corollary \ref{cor: Einstein_m} we make explicit the equations that characterize an Einstein \mwp. The type of computations applied to obtain these expressions follows the line of those developed in \cite[Sections 2 and 3]{Dobarro-Unal2005}.
As a byproduct of this initial analysis, we show in Corollary \ref{cor: 2 Einstein_m} that, \emph{\2a singly warped product $B\times_{b_1} F_1$ of constant scalar curvature $\tau$ with Einstein fiber $F_1$" is Einstein if and only if
\begin{equation*}
  \Ric_{B}=\frac{\tau}{n + s_1} g_{B}+
			\frac{s_1}{b_1}{\rm H}_{B}^{b_1},			
\end{equation*}
where $(B,g_B)$ is a semi-Riemannian manifold, $\Ric_{B}$ is its Ricci tensor and ${\rm H}_{B}^{b_1}$ the $g_B$-Hessian tensor of the warping function $b_1$.
}

\noindent In \cite{dobarro-lamidozo1987} (see also \cite{CotiZelati-Dobarro-Musina1997}) the authors analyzed by means of order, variational and bifurcation methods, the existence of singly \swp s of constant scalar curvature $\tau$ when the basis is a compact Riemannian manifold. Applying Corollary \ref{cor: 2 Einstein_m} and the bounds obtained in Remark \ref{rem:dl} (see also Theorem \ref{thm:range of lambda}) we analyze under which conditions of the parameter $\tau$ the singly \swp s obtained in that article could be an Einstein manifold.

\medskip

A key aspect of the above results is the relationship between equation \eqref{Einst-mwp-c} in Corollary \ref{cor: Einstein_m} with $m=1$, and the difference between the hypothetical scalar curvature of the \swp~if it were Einstein, and its scalar curvature under the single information that \eqref{Einst-mwp-a} and \eqref{Einst-mwp-b} are verfied. In Theorem \ref{Theor: DR} we consider the analogous relation for a \mwp, and then we show the relevant difference between the cases of a single fiber and that of more than one fiber. See the consequences in Corollary \ref{cor: pre-Einstein_m}.

\medskip

Considering again Corollary \ref{cor: Einstein_m}, we seek to obtain some necessary and/or sufficient conditions that make a \mwp ~an Einstein manifold, in particular under the hypothesis that the basis is Riemannian and perhaps compact. As consequence, we reobtain results in \cite{Pal-Kumar2019}, improving some of them.

Subsequently, for a \mwp, we analyze the feasibility of obtaining, in some sense, equation \eqref{Einst-mwp-c}, from \eqref{Einst-mwp-a} and \eqref{Einst-mwp-b}, as Kim D-S. and Kim Y. H. do in \cite{Kim-Kim2003} for a single \swp. Our result improves on that obtained by \cite{Pal-Kumar2019} by showing that they impose an unnecessary hypothesis. But we show that what they claim, in our view, is not feasible, except in the case m=1. Our result is as follows.

\begin{thm}\label{Thm: PalKu}
	Let $\left(B^{n}, g_{B}\right)$ be a compact Riemannian manifold of dimension $n \geqq 2$. Suppose that $b_i$ are non-constant smooth functions on $B$ satisfying (\ref{Einst-mwp-a}) for a constant $\lambda \in \mathbb{R}$ and some positive integers $s_i$,  $i=1,\dots,m$.
	 Then $b_1, \dots, b_m$ satisfy
\footnote{Note that our convention for the sign of the Laplacian does not agree with that in \cite{Besse2008}, but it does with that in \cite{ONeill1983}; i.e. $\Delta f$  denote the Laplace-Beltrami operator of $f$ given by $\trace H^f$, where $H^f$ is the Hessian of $f$ (cfr. \eqref{eq:Lap-Bel}).}
\footnote{Whether $B$ is Riemannian or semi-Riemannian, $\|\grad_B b\|_B$ denotes $g_B(\grad_B b, \grad_B b)$.}
  \[
\sum_{k=1}^m\frac{s_k}{2 b_k^{2}}d\Big\{\sum_{j=1}^ms_j\frac{b_k}{b_j}g_{B}(\grad_{B}b_k,\grad_{B}b_j)
+ b_k\Delta_B b_k - \|\grad_B  b_k\|_B^{2}
+ \lambda b_k^2\Big\}=0.
\]
\end{thm}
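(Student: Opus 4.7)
The plan is to take the divergence of equation~\eqref{Einst-mwp-a}, which from Corollary~\ref{cor: Einstein_m} has the structural form $\Ric_{B} = \lambda g_{B} + \sum_{i=1}^{m} (s_{i}/b_{i})\,H_{B}^{b_{i}}$. On the left-hand side, the contracted second Bianchi identity yields $\diver \Ric_{B} = \tfrac{1}{2}\,d\tau_{B}$, while tracing \eqref{Einst-mwp-a} gives $\tau_{B} = n\lambda + \sum_{i} (s_{i}/b_{i})\,\Delta_{B} b_{i}$, whose differential is readily computed by the Leibniz rule. Balancing the two sides produces a 1-form identity that must vanish.

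Next, each term $\diver(b_{i}^{-1} H_{B}^{b_{i}})$ on the right is handled by commuting covariant derivatives via the Ricci identity, which gives the Bochner-type formula
\[
\diver\bigl(b^{-1} H_{B}^{b}\bigr) = -\frac{1}{2b^{2}}\,d\|\grad_{B} b\|_{B}^{2} + \frac{1}{b}\,d(\Delta_{B} b) + \frac{1}{b}\,\Ric_{B}(\grad_{B} b,\cdot).
\]
The residual $\Ric_{B}(\grad_{B} b_{i},\cdot)$, which embodies the cross-fibre coupling that makes the multi-warped case genuinely different from the singly warped one, is eliminated by substituting \eqref{Einst-mwp-a} once more. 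This introduces double sums of the form $(s_{i}s_{l}/b_{l})\,H_{B}^{b_{l}}(\grad_{B} b_{i},\cdot)$ which, after symmetrisation in $i \leftrightarrow l$ and the Leibniz identity $d\bigl(g_{B}(\grad_{B} b_{i},\grad_{B} b_{l})\bigr) = H_{B}^{b_{i}}(\cdot,\grad_{B} b_{l}) + H_{B}^{b_{l}}(\cdot,\grad_{B} b_{i})$, should consolidate as exact differentials of the mixed products $(b_{k}/b_{j})\,g_{B}(\grad_{B} b_{k},\grad_{B} b_{j})$ appearing inside the braces.

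The remaining terms fall into place by expansion of $d\tau_{B}$: the $(\Delta_{B} b_{k})\,db_{k}$ piece produced by $d(b_{k}^{-1}\Delta_{B} b_{k})$ combines with $b_{k}^{-1}d(\Delta_{B} b_{k})$ to reconstitute $d(b_{k}\Delta_{B} b_{k})/b_{k}^{2}$, while the $\lambda\,db_{k}$ and $\tfrac{1}{2b_{k}^{2}}\,d\|\grad_{B} b_{k}\|_{B}^{2}$ contributions account for the $\lambda b_{k}^{2}$ and $-\|\grad_{B} b_{k}\|_{B}^{2}$ terms after the overall $s_{k}/(2b_{k}^{2})$ normalisation. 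The principal obstacle I foresee is not conceptual but combinatorial: confirming that every cross term from the double sum is absorbed exactly, leaving no residual Hessian contraction, and in particular showing that Pal--Kumar's auxiliary hypothesis plays no role in closing the algebra. Testing the computation first in the single-fibre case $m=1$, where the identity must collapse to the Kim--Kim formula underlying Corollary~\ref{cor: 2 Einstein_m}, will provide an indispensable check on signs and normalisations before the general multi-index tally is carried out.
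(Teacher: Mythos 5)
Your plan is correct and follows essentially the same route as the paper's proof: contracted second Bianchi identity on the left, trace of \eqref{Einst-mwp-a} for $d\tau_B$, the Bochner-type formula for $\diver_B(b^{-1}H_B^{b})$ with the Ricci term eliminated by re-substituting \eqref{Einst-mwp-a}, and symmetrisation of the resulting double sum via $d\bigl(g_B(\grad_B b_k,\grad_B b_j)\bigr)=H_B^{b_k}(\grad_B b_j,\cdot)+H_B^{b_j}(\grad_B b_k,\cdot)$, with the leftover antisymmetric piece cancelling in the $(k,j)$ sum. Your single-fibre sanity check and the observation that Pal--Kumar's extra hypothesis is never used also match the paper's Remark \ref{rem:Pal-Kumar-results}.
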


\noindent Our proof follows the lines in \cite{Kim-Kim2003}, paying special attention to a number of fibers $m \ge 2$. Central tools for it are the divergence of the Ricci tensor and the Bianchi identities.


In Section \ref{sec:non-exist}, motivated by the example of S. Kim \cite[Theorem 3]{KimS2006}, we state a result of non-existence of a non-trivial Einstein generalized Kasner multiply warped product with compact Riemannian basis, viz.

\begin{thm}\label{Thm-productBase}
   Let $(N_1, g_{N_1})$ and $(N_2, g_{N_2})$ be compact Riemannian manifolds with dimensions $r_1$ and $r_2$ and scalar curvatures $\tau_{N_1}$ and $\tau_{N_2}$, respectively, such that $\tau_{N_1}$ is a positive constant and the total scalar curvature of $N_2$ is non-positive, that is, $ \displaystyle \int_{N_2} \tau_{N_2} \leq 0$. Consider the product Riemannian manifold $B^n=N_1^{r_1} \times N_2^{r_2}$ of dimension $n=r_1+r_2,$ with metric $g_B=g_{N_1} \oplus g_{N_2}$
   and total scalar curvature $\sigma_B$. Then,
        \begin{enumerate}
      \item    $N_1 \times N_2$ cannot be the base of any trivial Einstein \gK ~manifold.
              %
        \item There are no non-trivial Einstein \gK ~manifold with base $B$.
        \end{enumerate}

\medskip

%
\end{thm}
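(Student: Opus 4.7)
For Part (1), a trivial \gK~has its common underlying function $f$ (and hence each warping function $b_i = f^{c_i}$) constant. Corollary \ref{cor: Einstein_m} then reduces the Einstein condition to $\Ric_B = \lambda g_B$ together with each fiber $F_i$ being Einstein. Since $B = N_1 \times N_2$ is a Riemannian product, $B$ Einstein with constant $\lambda$ forces both factors to be Einstein with the same $\lambda$, so $\tau_{N_j} = r_j \lambda$ for $j = 1, 2$. Positivity of $\tau_{N_1}$ gives $\lambda > 0$, whence $\tau_{N_2} = r_2 \lambda > 0$, contradicting $\int_{N_2}\tau_{N_2} \leq 0$.

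For Part (2) I argue by contradiction. Suppose a non-trivial Einstein \gK~with base $B$ exists; write $b_i = f^{c_i}$ with $f$ non-constant, and set $C := \sum_i s_i c_i$ and $D := \sum_i s_i c_i(c_i-1)$, so that $C + D = \sum_i s_i c_i^2 > 0$ by non-triviality. Applying the base equation \eqref{Einst-mwp-a} to cross-pairs $X_1 \in TN_1$, $X_2 \in TN_2$ (on which $\Ric_B$ vanishes because the metric is a product) and using $H^{b_i}(X_1,X_2) = X_1 X_2 b_i$ on the product connection, one obtains the pointwise identity $C f\,(X_1 X_2 f) + D (X_1 f)(X_2 f) = 0$. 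When $C \ne 0$ choose $\beta := (C+D)/C$; a direct computation then shows $h := f^{\beta}$ satisfies $X_1 X_2 h = 0$, hence $h$ separates as $h(p_1,p_2) = u(p_1) + v(p_2)$. The case $C = 0$ is treated in parallel: then $D \ne 0$ and the identity forces $(X_1 f)(X_2 f) \equiv 0$, so connectedness of $B$ yields that $f$ is constant along $N_1$ or along $N_2$.

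Next, integrating the trace of the $N_2$-block of \eqref{Einst-mwp-a} over $N_2$ at fixed $p_1$, with $\int_{N_2}\Delta_{N_2}\log b_i = 0$, yields
\[
\int_{N_2}\tau_{N_2} \;-\; r_2\lambda\,\mathrm{vol}(N_2) \;=\; \sum_i s_i\int_{N_2}\|\grad_{N_2}\log b_i\|^2 \;\geq\; 0,
\]
so the hypothesis $\int_{N_2}\tau_{N_2} \leq 0$ forces $\lambda \leq 0$. The pointwise trace of the $N_1$-block reads $\tau_{N_1} - r_1\lambda = (C/f)\Delta_{N_1}f + (D/f^2)\|\grad_{N_1}f\|^2$, and using $\beta - 1 = D/C$ this collapses to $\Delta_{N_1}h = k_1\,h$ with the constant $k_1 := \beta(\tau_{N_1}-r_1\lambda)/C$. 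Substituting $h = u+v$ gives $\Delta_{N_1}u(p_1) = k_1 u(p_1) + k_1 v(p_2)$; independence of the left-hand side from $p_2$ forces either $k_1 = 0$ or $v$ constant. But $k_1 = 0$ means $\tau_{N_1} = r_1\lambda \leq 0$, contradicting $\tau_{N_1} > 0$. Hence $v$ is constant and $f$ depends only on $p_1 \in N_1$ (the $C=0$ subcase in which $f$ depends only on $p_2$ is ruled out in the same way).

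Consequently the total space splits as a Riemannian product $N_2 \times \widetilde{M}$, where $\widetilde{M} = N_1 \times_{b_1} F_1 \times \cdots \times_{b_m} F_m$ is a non-trivial Einstein \mwp~with compact Riemannian base $N_1$ and Einstein constant $\lambda \leq 0$, hence non-positive scalar curvature. This contradicts the Pal-Kumar type non-existence result \cite{Pal-Kumar2019} (and, in the borderline case $\lambda = 0$, the extension to \mwp~of \cite{Kim-Kim2003} obtained earlier in this paper). The main technical obstacle is the separability step: one must verify that every degenerate configuration of $C$ and $D$ either reduces to the trivial case or delivers a valid exponent $\beta$; once $f$ is shown to depend on $p_1$ alone, the reduction to the lower-dimensional non-existence theorem is routine.
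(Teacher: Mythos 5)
Your Part (1) is essentially the paper's argument: triviality forces $N_1\times N_2$, hence each factor, to be Einstein with the same constant $\lambda$, and $\tau_{N_1}>0$ together with $\int_{N_2}\tau_{N_2}\le 0$ gives the contradiction $0<\lambda\le 0$.

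For Part (2) you take a genuinely different route, and it has a gap at the final step. After showing that $f$ depends only on $p_1$, you reduce to a non-trivial Einstein \mwp~ $\widetilde M=N_1\times_{b_1}F_1\times\cdots\times_{b_m}F_m$ with compact Riemannian base and $\lambda\le 0$, and you dispose of it by citing the Pal--Kumar non-existence result and, for the borderline case $\lambda=0$, ``the extension of Kim--Kim obtained earlier in this paper.'' Neither citation covers $\lambda=0$ with $m\ge 2$: Remark \ref{rem:Pal-Kumar-results} explicitly disputes the corresponding step in \cite{Pal-Kumar2019}, and Lemma \ref{lem: lambda estimates mwp}(i) concludes triviality from $\lambda=0$ only when $m=1$ or when the total scalar curvature of the base vanishes --- here the base is $N_1$ with $\tau_{N_1}>0$, so that criterion is unavailable. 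The case $\lambda<0$ is fine (Lemma \ref{lem: lambda estimates mwp}(ii) forces all $b_i$ constant), but $\lambda=0$ is exactly the case your cited tools do not close. The gap is repairable because $\widetilde M$ is still a \gK: equation \eqref{eq:Einstein conditions gK e} with $\lambda=\mu_i=0$ and $p_i\ne 0$ gives $\Delta_{N_1}\varphi^{\zeta}=0$ (respectively $\Delta_{N_1}\log\varphi=0$ when $\zeta=0$), and Bochner's lemma forces $\varphi$ constant --- this is precisely Remark \ref{rem:gK non positive lambda}. You must use the single-function \gK~ structure, not a general \mwp~ statement.

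Once that repair is made, your separation-of-variables section becomes superfluous: your own first computation (integrating the $N_2$-trace of \eqref{Einst-mwp-a} over $N_2$ via Remark \ref{divergence-thm}) already yields $\lambda\le 0$, and Remark \ref{rem:gK non positive lambda} / Theorem \ref{thm:range of lambda} says a non-trivial Einstein \gK~ with compact Riemannian base requires $\lambda>0$; the contradiction is immediate. The paper runs the same two ingredients in the opposite order: it first restricts to $\lambda>0$ by the maximum principle applied to the fiber equation, then takes the $g_{N_2}$-trace of \eqref{eq:Einstein conditions gK a} (splitting into the cases $\zeta\ne 0$ and $\zeta=0$, $\eta>0$) and integrates over $N_2$ to get $\int_{N_2}\tau_{N_2}>0$, contradicting the hypothesis. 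Your cross-Hessian identity $Cf\,X_1X_2f+D(X_1f)(X_2f)=0$ and the additive splitting of $f^{1/\alpha}$ are correct and interesting in their own right (though the globalization in the subcase $C=0$, where $(X_1f)(X_2f)\equiv 0$ only gives a pointwise alternative, needs more care than you give it), but they are not needed for this theorem.
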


While in the previous result we showed a non-admissible Riemannian manifold with positive total scalar curvature as the basis
of any Einstein \gK, in Theorem \ref{thm:range of lambda}, for a given compact Riemannian manifold $B$
we obtain the range of admissible proportionality constants between the Ricci tensor and the metric tensor
for a Einstein nontrivial \gK with basis $B$.

\medskip

The structure of this paper is as follows. In Section \ref{sec:multiply wp}, we provide an overview of several properties of multiple warped product manifolds, including the representation of the Ricci tensor and the scalar curvature. In Section \ref{sec:gK}, we introduce the concept of generalized Kasner semi-Riemannian manifolds and present the formulas for their Riemann and Ricci tensors and scalar curvature.
Sections \ref{sec:Einstein} and \ref{sec:non-exist} contain the central results of this article.
In Section \ref{sec:Einstein}, we examine the conditions under which a $\mwp$ can be considered an Einstein manifold and present the proofs of Theorem \ref{Theor: DR} and Theorem \ref{Thm: PalKu}.
Section \ref{sec:non-exist} is dedicated to the proof of Theorems \ref{Thm-productBase} and \ref{thm:range of lambda}, and to the establishment of several properties of a product Einstein manifold. Finally, in Section \ref{sec:conclusions}, we summarize our findings and explore potential future research directions.
\hbadness=99999

\bigskip


\section{Preliminaries about Multiply Warped Products}
\label{sec:multiply wp}
In this section, we review certain properties of multiple warped product manifolds. For a more exhaustive understanding, we refer to \cite{Unal2000PHD,Unal2000,Dobarro-Unal2005}. Most of the notation and terminology is taken from Bishop and O’Neill \cite{Bishop-O'Neill1969}.

Throughout this article, unless explicitly mentioned, we will consider semi-Riemannian manifolds (also called pseudo-Riemannian).

\begin{dfn}\label{def:mwp} Let $(B,g_B), (F_1,g_{F_1}), \dots,
(F_m,g_{F_m})$ be {\it pseudo-Riemannian} manifolds of dimensions $n, s_1, \dots, s_m$, respectively, and
let $\vect{b}= (b_1, \dots , b_m) \in (C^\infty _{>0}(B))^m$. The {\it multiply
warped product} (\mwp, for short) $B \times_{b_1} F_1 \times_{b_2}
\cdots \times_{b_m} F_m$
($B\times_{\vect{b}}\vect{F}$, for short, where $\vect{F}=F_1 \times \dots \times F_m$) is the {\it product manifold}
$B \times F_1 \times \cdots \times F_m$ furnished with the metric tensor $g=g_B \oplus
b_{1}^{2}g_{F_1} \oplus \cdots \oplus b_{m}^{2}g_{F_m}
= g_B \oplus {\bigoplus}_{i=1}^m b_i^2 g_{F_i}
$. More explicitly, $g$
is defined by
\begin{equation} \label{idwp}
g=\pi^{\ast}(g_B) \oplus (b_1 \circ \pi)^2
\sigma_{1}^{\ast}(g_{F_1}) \oplus \cdots \oplus (b_m \circ \pi)^2
\sigma_{m}^{\ast}(g_{F_m}).
\end{equation}
\noindent $(B,g_B)$ is called the \textit{base} of $B\times_{\vect{b}}\vect{F}$ and for
each $i$,  $b_i$ is called the $i$-th {\it warping function} and
$(F_i,g_{F_i})$ the $i$-th {\it fiber} of $B\times_{\vect{b}}\vect{F}$.
\end{dfn}

A multiple warped product $B\times_{\vect{b}}\vect{F}$ for which each warping function $b_i$ is constant can be considered as a product manifold $B\times\vect{\tilde{F}}$, where the fiber $\vect{\tilde{F}}$ is just $\vect{F}$ with the re-scaled metric ${\bigoplus}_{i=1}^m b_i^2 g_{F_i}$.

 From now on, we will use the definition and the sign convention
for the {\it curvature} in \cite[p.
16-25]{Beem-Ehrlich-Easley1996}, \cite[p. 3]{Aubin1982}, \cite[p. 196]{Lee}, \cite[p. 12]{Yano1970} or \cite[p. 239]{Spivak1999} (note
the difference with \cite[p. 89]{ONeill1983}, \cite[p. 24]{Besse2008})
 namely: for an arbitrary
$n$-dimensional pseudo-Riemannian manifold $(N,g_N)$, let $\nabla^N$ denotes the Levi-Civita connection on $N$. Then, the Riemann
curvature tensor is
\begin{equation*}
  \mathrm{R}_N(X,Y)Z:= [\nabla^N_X,\nabla^N_Y] Z - \nabla^N_{[X,Y]} Z
  =\nabla^N_X \nabla^N_Y Z - \nabla^N_Y \nabla^N_X Z -
  \nabla^N_{[X,Y]} Z,
\end{equation*}
where $X,Y,Z \in \mathfrak X(N)$ and $[\cdot,\cdot]$ is the Lie bracket; besides the Ricci tensor is
\begin{equation*}
  \mathrm{\Ric}_N(X,Y):= \sum_{j=1}^n g_N(E_j,E_j)g_N(R(E_j,Y)X ,E_j),
\end{equation*}
where $\{E_j\}_j$ is a "local" frame field on $(N,g_N)$.

\noindent Furthermore, on $(N,g_N)$, we will recall that given a function $\varphi \in C^\infty(N)$ its Hessian is the symmetric two covariant tensor
\begin{equation*}
  H_N^\varphi (X,Y) := \nabla^N \nabla^N \varphi (X,Y) = XY\varphi - (\nabla^N _X Y) \varphi = g_N(\nabla^N_X (\grad_{N} \varphi), Y)
\end{equation*}
and the Laplace-Beltrami operator of $\varphi$ is
\begin{equation}\label{eq:Lap-Bel}
  \Delta_{N} \varphi :={\nabla^N}^i {\nabla^N}_i \varphi =\frac{1}{\sqrt{|g_N|}}\partial_i(\sqrt{|g_N|}{g_N}^{ij} \partial_j \varphi ),
\end{equation}
where $\nabla^N$ is the Levi-Civita connection of $(N,g_N)$ and $\grad_N \varphi$ is the field $g_N$-metrically equivalent to the differential of $\varphi$.

\smallskip

In the following two propositions, we will recall the expressions of the Ricci tensor and the scalar curvature for a \mwp ~ (see \cite{Dobarro-Unal2005}).

\begin{prp}\label{prp: Ricci mwp} \cite[p. 81]{Dobarro-Unal2005}
Let $B\times_{\vect{b}}\vect{F}$ be a \emph{\mwp} and $\nabla$ its
Levi-Civita connection.
Then, for any $X,Y \in \mathfrak L(B)$, $V  \in \mathfrak L(F_i)$
and $W \in \mathfrak L(F_j)$:
\footnote{$\mathfrak L$ means the set of fields lifted by the corresponding projections. Besides, we will denote with the same letter a field and its lifted by the corresponding projection, e.g. if $X \in \mathfrak{X}(B)$ we will denote with $X$ its horizontal lift in $\mathfrak{L}(B) \subset \mathfrak{X}(B\times_{\vect{b}}\vect{F})$ too; similarly for the vertical lift $U \in \mathfrak{L}(F_i)$ of $U \in \mathfrak{X}(F_i)$.
}
\begin{subequations}\label{eq:Ricci tensor mwp}
\begin{align}
&\hspace{-1.5cm}\Ric(X,Y)=\Ric_{B}(X,Y)-\sum_{i=1}^m
\frac{s_i}{b_i}{\rm H}_{B}^{b_i}(X,Y) \label{eq:Ricci tensor mwp a}\\
&\hspace{-1.5cm}\Ric(X,V)=0 \label{eq:Ricci tensor mwp b}\\
&\hspace{-1.5cm}\Ric(V,W)=0 \hbox{ if } i \neq j \label{eq:Ricci tensor mwp c}\\
&\hspace{-1.5cm}\Ric(V,W)=\Ric_{F_i}(V,W)
- \Bigg(\frac{\Delta_{B}b_i}{b_i}+(s_i-1) \frac{g_{B}(
\grad_{B}b_i, \grad_{B}b_i)}{b_{i}^2} \label{eq:Ricci tensor mwp d} \\
&  \qquad +\sum_{\begin{subarray}{l}k=1\\k\neq i\end{subarray}}^m
s_k \frac{g_{B}(\grad_{B}b_i,\grad_{B}b_k)}{b_i b_k}\Bigg)b_i^2
g_{F_i}(V,W) \hbox{ if } i=j, \nonumber
\end{align}
\end{subequations}
where $\Ric$ (respectively, $\Ric_B$ or $\Ric_{F_i}$) is the Ricci
curvature tensor of $B \times_{\vect{b}} \vect{F}$
(respectively, $(B,g_B)$ or $(F_i,g_{F_i})$).
\end{prp}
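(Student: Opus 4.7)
The plan is to follow the Bishop--O'Neill derivation for singly warped products, adapted to handle multiple fibers. The essential input is the explicit Levi-Civita connection $\nabla$ of $g = g_B \oplus \bigoplus_{i=1}^m b_i^2 g_{F_i}$, which I would derive from the Koszul formula. With the notational convention that horizontal and vertical lifts are denoted by the same letters as the underlying fields, the connection acts by:
$\nabla_X Y = \nabla^B_X Y$ for $X,Y \in \mathfrak{L}(B)$;
$\nabla_X V = \nabla_V X = \frac{X(b_i)}{b_i} V$ for $V \in \mathfrak{L}(F_i)$;
$\nabla_V W = 0$ for $V \in \mathfrak{L}(F_i)$, $W \in \mathfrak{L}(F_j)$ with $i \neq j$; and
$\nabla_V W = \nabla^{F_i}_V W - b_i\, g_{F_i}(V,W)\, \grad_B b_i$ for $V, W \in \mathfrak{L}(F_i)$. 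The cross-fiber decoupling in the third identity is the one new feature beyond the singly-warped case.

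Next I would compute the Riemann curvature tensor block by block using $\mathrm{R}(A,B)C = \nabla_A \nabla_B C - \nabla_B \nabla_A C - \nabla_{[A,B]} C$. The computations I expect to need explicitly are $\mathrm{R}(V,X)Y$ for $V \in \mathfrak{L}(F_i)$ (which produces a term $-\frac{1}{b_i}H_B^{b_i}(X,Y)V$ after a short manipulation), $\mathrm{R}(X,V)W$ for $V,W$ in fibers (splitting into vanishing cross-fiber part and a same-fiber part governed by $\grad_B b_i$), and $\mathrm{R}(V,W)U$ for $V,U \in \mathfrak{L}(F_i)$, $W \in \mathfrak{L}(F_k)$. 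The latter is where the inter-fiber coupling $g_B(\grad_B b_i, \grad_B b_k)/(b_i b_k)$ enters, and it arises precisely because $\nabla_V W = 0$ when $i \neq k$ but $\nabla_V \nabla_U W = \nabla_V\!\big(\tfrac{U(b_k)}{b_k}W\big)$ does not vanish.

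Finally, to get Ricci I would trace using a frame adapted to the product structure: an orthonormal (or pseudo-orthonormal) frame $\{X_\alpha\}$ on $B$ together with frames $\{V^{(i)}_\beta\}_{\beta=1}^{s_i}$ on each $F_i$ rescaled by $1/b_i$ so that they are orthonormal for $g$. For \eqref{eq:Ricci tensor mwp a}, the horizontal trace recovers $\Ric_B(X,Y)$ and each fiber contributes $s_i$ copies of $-\frac{1}{b_i}H_B^{b_i}(X,Y)$. For \eqref{eq:Ricci tensor mwp b} and \eqref{eq:Ricci tensor mwp c} the decoupling features of the connection force cancellation term by term. For \eqref{eq:Ricci tensor mwp d}, the horizontal trace yields the $\Delta_B b_i / b_i$ term (after the $1/b_i$ rescaling of the vertical frame produces a $b_i^2 g_{F_i}(V,W)$ factor), the trace along the remaining $s_i-1$ directions inside $F_i$ produces the self-interaction $(s_i-1)\|\grad_B b_i\|_B^2 / b_i^2$, and traces along the other fibers $F_k$ produce the sum $\sum_{k\neq i} s_k\, g_B(\grad_B b_i, \grad_B b_k)/(b_i b_k)$; the intrinsic Ricci $\Ric_{F_i}(V,W)$ comes from the fiber part of $\nabla_V \nabla_U W$.

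The main obstacle is this bookkeeping in case \eqref{eq:Ricci tensor mwp d}: one must correctly separate the three sources contributing to the bracketed expression (the horizontal Laplacian term, the same-fiber self term with coefficient $s_i-1$, and the cross-fiber terms with coefficients $s_k$) and verify that the rescaling by $1/b_i$ in the vertical frame produces precisely the multiplicative factor $b_i^2 g_{F_i}(V,W)$ on the right-hand side, so that the formula is coordinate-invariant.
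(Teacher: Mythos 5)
The paper does not prove this proposition; it is quoted from \cite{Dobarro-Unal2005}, and your plan --- Koszul formula for the connection of $g_B\oplus\bigoplus_i b_i^2 g_{F_i}$, block-by-block computation of the curvature tensor, then tracing in a frame adapted to the product with the vertical frames rescaled by $1/b_i$ --- is exactly the route taken there, and all the connection and curvature formulas you list, as well as the bookkeeping of the three contributions to \eqref{eq:Ricci tensor mwp d}, are correct. One small slip in your heuristic for the cross-fiber term: for $V,U\in\mathfrak L(F_i)$ and $W\in\mathfrak L(F_k)$ with $i\neq k$, the coupling does not come from $\nabla_V\nabla_U W=\nabla_V\bigl(\tfrac{U(b_k)}{b_k}W\bigr)$ (indeed $\nabla_U W=0$ and $U(b_k)=0$ since $U$ is vertical), but from $\nabla_W$ acting on the horizontal component $-b_i\,g_{F_i}(V,U)\grad_B b_i$ of $\nabla_V U$, via $\nabla_W(\grad_B b_i)=\tfrac{(\grad_B b_i)(b_k)}{b_k}\,W=\tfrac{g_B(\grad_B b_i,\grad_B b_k)}{b_k}\,W$.
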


\begin{prp}\label{prp: scalar curvature mwp} \cite[p. 81]{Dobarro-Unal2005}
Let $B\times_{\vect{b}}\vect{F}$ be a \emph{\mwp} and $\nabla$ its
Levi-Civita connection. Then,
the scalar curvature $\tau$ of $B \times_{\vect{b}} \vect{F}$
admits the following expression
\begin{align}\label{eq:sc_in_du2005_p2.6_1}
\tau  = & \tau_{B}-2\sum_{i=1}^m s_i \frac{\Delta_{B}b_i}{b_i}
+\sum_{i=1}^m \frac{\tau_{F_i}}{b_{i}^2}-
\sum_{i=1}^m s_i(s_i-1)\frac{\| \grad_{B}b_i \|_{B}^{2}}{b_{i}^2}\\
& -  \sum_{i=1}^m
\sum_{\begin{subarray}{l}k=1\\k\neq i\end{subarray}}^m
s_k s_i \frac{g_{B}(\grad_{B}b_i,\grad_{B}b_k)}{b_i b_k},\nonumber
\end{align}
where $\tau_B$ (respectively, $\tau_{F_i}$) is the scalar
curvature of $(B,g_B)$ (respectively, $(F_i,g_{F_i})$).
\end{prp}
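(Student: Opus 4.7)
The plan is to obtain the scalar curvature by tracing the Ricci tensor formula from Proposition \ref{prp: Ricci mwp}. Since the metric of $B\times_{\vect{b}}\vect{F}$ is block-diagonal, $g = g_B \oplus \bigoplus_{i=1}^m b_i^2 g_{F_i}$, the trace splits into a horizontal sum over $B$ and vertical sums over each fiber, with no cross terms thanks to \eqref{eq:Ricci tensor mwp b} and \eqref{eq:Ricci tensor mwp c}.

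Concretely, I would pick a local orthonormal frame $\{X_a\}_{a=1}^n$ on $(B,g_B)$ (with signs $\epsilon_a = g_B(X_a,X_a)$) and, for each fiber, a local orthonormal frame $\{V^{(i)}_\alpha\}_{\alpha=1}^{s_i}$ on $(F_i,g_{F_i})$ (with signs $\epsilon^{(i)}_\alpha$). Lifting to the warped product, the fields $X_a$ remain unit and $V^{(i)}_\alpha/b_i$ are unit with respect to $g$. Then
\[
\tau = \sum_{a=1}^n \epsilon_a \Ric(X_a,X_a) + \sum_{i=1}^m \sum_{\alpha=1}^{s_i} \frac{\epsilon^{(i)}_\alpha}{b_i^2}\, \Ric(V^{(i)}_\alpha, V^{(i)}_\alpha).
\]
Using \eqref{eq:Ricci tensor mwp a}, the horizontal contribution yields $\tau_B - \sum_{i=1}^m \tfrac{s_i}{b_i}\trace_{g_B} H_B^{b_i} = \tau_B - \sum_{i=1}^m s_i \tfrac{\Delta_B b_i}{b_i}$ by \eqref{eq:Lap-Bel}. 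Using \eqref{eq:Ricci tensor mwp d}, for each fixed $i$, the factor $g_{F_i}(V^{(i)}_\alpha,V^{(i)}_\alpha)$ traces to $s_i$, and $\Ric_{F_i}$ traces to $\tau_{F_i}$; after multiplying by $1/b_i^2$ and summing, the vertical contribution equals
\[
\sum_{i=1}^m \frac{\tau_{F_i}}{b_i^2} - \sum_{i=1}^m s_i\frac{\Delta_B b_i}{b_i} - \sum_{i=1}^m s_i(s_i-1)\frac{\|\grad_B b_i\|_B^{2}}{b_i^{2}} - \sum_{i=1}^m\sum_{\substack{k=1\\k\neq i}}^m s_k s_i\frac{g_B(\grad_B b_i,\grad_B b_k)}{b_i b_k}.
\]
Adding the two contributions produces the coefficient $-2$ in front of $\sum_i s_i\,\Delta_B b_i/b_i$, which is exactly \eqref{eq:sc_in_du2005_p2.6_1}.

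No real obstacle is expected beyond bookkeeping: the main point is recognizing that the $b_i^2$ in front of $g_{F_i}$ in \eqref{eq:Ricci tensor mwp d} is exactly cancelled by the $1/b_i^2$ coming from rescaling the fiber frame to be $g$-orthonormal, so the only $b_i^2$ appearing in the denominators of the final formula comes from the fiber scalar curvatures $\tau_{F_i}$ and the norm-squared terms. A frame-independent alternative would be to use $\trace_g = \trace_{g_B} + \sum_i b_i^{-2}\trace_{g_{F_i}}$ directly on each equation of \eqref{eq:Ricci tensor mwp}, which gives the same result without choosing a specific frame.
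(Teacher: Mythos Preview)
Your proposal is correct and is exactly the standard derivation: the paper does not supply its own proof of this proposition but simply recalls it from \cite[p.~81]{Dobarro-Unal2005}, where the scalar curvature is obtained precisely by taking the $g$-trace of the Ricci tensor formulas of Proposition~\ref{prp: Ricci mwp}. Your bookkeeping (horizontal trace contributing $\tau_B - \sum_i s_i\,\Delta_B b_i/b_i$, vertical trace contributing the remaining terms, and the two Laplacian pieces combining to give the coefficient $-2$) is accurate and matches that argument.
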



\bigskip

\section{Generalized Kasner Semi-Riemannian Manifolds}\label{sec:gK}
In this section, we give an extension of Kasner space-times and consider their scalar
and Ricci curvatures.
\medskip
\begin{notation}
For $\vect{\alpha} = (\alpha_1, \dots, \alpha_m) \in \mathbb{R}^m$ and $\vect{r} = (r_1, \dots, r_m) \in \mathbb{R}^m$, we will denote  $\vect{\alpha}^{\vect{r}}=(\alpha_1^{r_1}, \dots, \alpha_m^{r_m}).$
\end{notation}
\medskip
\begin{dfn}\label{def:gKmwp} A \textit{generalized Kasner semi-Riemannian manifold} (\gK, for short) is a $B \times_{\vect{b}} \vect{F}$ \mwp, where
$\vect{b}=\vect{\varphi}^{\vect{p}}$ with $\vect{\varphi}_i = \varphi  \textrm{ for each } i \in \{1,\dots,m\}$, $\varphi \in C^{\infty}_{>0}(B)$ and $\vect{p} \in \mathbb{R}^m$.
\end{dfn}
From now on, for an arbitrary \gK  as in Definition \ref{def:gKmwp}, we will denote
\begin{equation}\label{eq:zeta-eta}\tag{$\zeta;\eta$}
  \zeta:= \sum_{l=1}^m s_l p_l \quad \text{ and } \quad \eta:= \sum_{l=1}^m s_l p_l^2. 
\end{equation}
We observe that: $\eta = 0$ if and only if $p_l = 0  \textrm{ for each } l\in \{1,2, \dots , m\}$. But $\zeta $ could be $0$, without all the $p_l$'s simultaneously being $0$.

Taking into account the following relations for the gradient, the Laplace-Beltrami operator and the Hessian, namely:
for all $q \in \mathbb{R}$ and all $\gamma \in C^\infty_{>0} (B)$,
\begin{equation}\label{eq:gradient}
  \grad_B \gamma^{q}= q \gamma^{q-1}\grad_B \gamma,
\end{equation}
\begin{equation}\label{eq:Lap2}
  \dfrac{\Delta_B \gamma^{q}}{\gamma^q}=q \left[\frac{\Delta_B \gamma}{\gamma}+(q-1)\frac{\Vert   \grad_B \gamma \Vert_B^2}{\gamma^2} \right],
\end{equation}
\begin{equation}\label{eq:Hessian2}
  \dfrac{1}{\gamma^q}H_B ^{\gamma^{q}}=q \left[\frac{1}{\gamma}H_B ^\gamma+(q-1)\frac{1}{\gamma^2} d\gamma \otimes d\gamma\right],
\end{equation}

Next, we recall the expressions for the Riemann tensor of \gK s.

\begin{cor} \label{cor:gK-Riemann curv tensor mwp} \cite[p. 80]{Dobarro-Unal2005}
Let $B\times_{\vect{\varphi^p}}\vect{F}$ be a \emph{\gK} and $\nabla$ its Levi-Civita connection.
Then, for any $X,Y,Z \in \mathfrak L(B)$, $V  \in \mathfrak L(F_i), W  \in \mathfrak L(F_j)$ and $U \in \mathfrak L(F_k)$:
\begin{subequations}\label{eq:0Rcurv tensor gK}
\begin{alignat}{2}
  &{\rm R}(X,Y)Z = {\rm R}_B(X,Y)Z,\label{eq:1Rcurv tensor gK a} \medskip\\
  &{\rm R}(V,X)Y = -\frac{{\rm H}_B^{\varphi^{p_i}}(X,Y)}{\varphi^{p_i}}V  =
    -p_i [(p_i - 1)\varphi^{- 2}d\varphi \otimes d\varphi (X,Y) + \varphi^{- 1} {\rm H}_B^{\varphi}(X,Y)]V,
    \label{eq:2Rcurv tensor gK b}\medskip\\
  &{\rm R}(X,V)W =\mathrm{R}(V,W)X=\mathrm{R}(V,X)W=0 \hbox{ if } i \neq j,
    \label{eq:3Rcurv tensor gK c}\medskip\\
  &{\rm R}(X,Y)V = 0,\label{eq:4Rcurv tensor gK d}\medskip\\
  &{\rm R}(V,W)X = 0  \hbox{ if } i=j,\label{eq:5Rcurv tensor gK e}\medskip\\
  &{\rm R}(V,W)U = 0  \hbox{ if } i=j \hbox{ and } i,j \neq k, \label{eq:6Rcurv tensor gK f}\medskip\\
  &{\rm R}(U,V)W = - p_i p_k \varphi^{2p_i - 2} g_{F_i}(V,W)
                     g_{B}(\grad_{B}\varphi,\grad_{B}\varphi)U \hbox{ if } i = j \neq k, \label{eq:7Rcurv tensor gK g}\medskip\\
  &{\rm R}(X,V)W = -\varphi^{p_i} g_{F_i}(V,W) p_i
     [(p_i -1)\varphi^{p_i -2} d \varphi (X) \grad_B \varphi + \varphi^{p_i -1} \nabla^{B}_{X}\grad_{B}\varphi ]
     \label{eq:8Rcurv tensor gK h}\medskip\\
    \hbox{ if } i=j,\nonumber\medskip\\
  &{\rm R}(V,W)U = \mathrm{R}_{F_i}(V,W)U + \label{eq:9Rcurv tensor gK i}\medskip\\
  & p_i^2 \varphi^{2p_i - 2}g_{B}(\grad_{B}\varphi, \grad_{B}\varphi)[g_{F_i}(V,U)W-g_{F_i}(W,U)V]
    \hbox{ if } i=j=k,\nonumber
\end{alignat}
\end{subequations}
where $\mathrm{R}$ (respectively, $\mathrm{R}_B$ or
$\mathrm{R}_{F_i}$) is the Riemann curvature tensor of $B\times_{\vect{\varphi^p}}\vect{F}$  (respectively, $(B,g_B)$ or
$(F_i,g_{F_i})$).
\end{cor}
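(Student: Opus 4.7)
The plan is to start from the general Riemann curvature formulas for a multiply warped product $B\times_{\vect{b}}\vect{F}$, which are obtained in \cite{Dobarro-Unal2005} alongside the Ricci expressions recalled in Proposition \ref{prp: Ricci mwp}, and then specialize them by setting $b_i=\varphi^{p_i}$. All of the simplification is driven by the identities \eqref{eq:gradient}, \eqref{eq:Lap2} and \eqref{eq:Hessian2} for the gradient, Laplace--Beltrami operator and Hessian of a power of a positive function; no additional geometric input is needed beyond the mwp Riemann formulas themselves. In particular, since a gK manifold is just a mwp with a very constrained family of warping functions (all powers of a single $\varphi$), each line of the corollary is obtained by direct substitution and algebraic rearrangement.

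First, the expressions \eqref{eq:1Rcurv tensor gK a}, \eqref{eq:3Rcurv tensor gK c}, \eqref{eq:4Rcurv tensor gK d}, \eqref{eq:5Rcurv tensor gK e} and \eqref{eq:6Rcurv tensor gK f} involve no warping function and are immediate from the mwp formulas. For \eqref{eq:2Rcurv tensor gK b}, the general mwp formula reads ${\rm R}(V,X)Y=-\frac{1}{b_i}{\rm H}_B^{b_i}(X,Y)\,V$; substituting $b_i=\varphi^{p_i}$ and applying \eqref{eq:Hessian2} produces the stated expansion in terms of $d\varphi\otimes d\varphi$ and ${\rm H}_B^\varphi$. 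Case \eqref{eq:8Rcurv tensor gK h} is analogous: one rewrites $\grad_B \varphi^{p_i}$ via \eqref{eq:gradient} and uses the Leibniz rule to compute $\nabla^B_X\grad_B \varphi^{p_i}$. For the cross-fiber case \eqref{eq:7Rcurv tensor gK g}, the mwp expression carries a factor $\frac{g_B(\grad_B b_i,\grad_B b_k)}{b_i b_k}$ multiplied by $b_i^2$; substituting $b_i=\varphi^{p_i}$, $b_k=\varphi^{p_k}$ and applying \eqref{eq:gradient} twice telescopes this into $p_i p_k\varphi^{2p_i-2}g_B(\grad_B \varphi,\grad_B \varphi)$. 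Finally, \eqref{eq:9Rcurv tensor gK i} comes from the same-fiber mwp formula, in which the correction to $\mathrm R_{F_i}$ is proportional to $\|\grad_B b_i\|_B^2/b_i^2$ times $b_i^2$; after substitution this reduces to $p_i^2\varphi^{2p_i-2}\|\grad_B\varphi\|_B^2$, multiplying the expected $g_{F_i}(V,U)W-g_{F_i}(W,U)V$.

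The main obstacle is purely bookkeeping: one has to keep track of the different powers of $\varphi$ generated by the gradient rule $\grad_B \varphi^q=q\varphi^{q-1}\grad_B\varphi$ versus the Hessian rule ${\rm H}_B^{\varphi^q}=q\varphi^{q-1}{\rm H}_B^\varphi+q(q-1)\varphi^{q-2}\,d\varphi\otimes d\varphi$, and to combine them correctly with the prefactors $b_i$ and $b_i^2$ already sitting inside the mwp Riemann formulas. There are no analytic subtleties, no cancellation miracles, and no need to invoke anything about $\zeta$ or $\eta$ at this stage; the whole argument is algebraic and essentially functorial in $\varphi$, which is why the statement can be packaged as a corollary of the mwp case rather than a standalone computation.
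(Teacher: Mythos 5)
Your proposal is correct and matches the route the paper intends: the corollary is recalled from \cite[p.~80]{Dobarro-Unal2005} and is obtained precisely by substituting $b_i=\varphi^{p_i}$ into the general \mwp~Riemann curvature formulas and simplifying with the power identities \eqref{eq:gradient}, \eqref{eq:Lap2} and \eqref{eq:Hessian2}, which is exactly the bookkeeping you describe (the paper itself gives no separate proof, only the preceding identities as the implicit justification). Your case-by-case checks, including the telescoping of $g_B(\grad_B b_i,\grad_B b_k)/(b_ib_k)$ into $p_ip_k\varphi^{-2}\|\grad_B\varphi\|_B^2$ and the Leibniz expansion of $\nabla^B_X\grad_B\varphi^{p_i}$, reproduce the stated right-hand sides correctly.
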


%
%


As a consequence, we have the following expressions for the Ricci tensor.

\begin{cor}\label{cor: Ricci gK_mwp}
	Let $(M,g)$ be the \emph{\gK}  $ B \times_{{\vect{\varphi}}^{\vect{p}}} \vect{F}$.
	Then, for any $X,Y \in \mathfrak L(B), V  \in \mathfrak L(F_i)$
	and $W \in \mathfrak L(F_j)$:
\begin{subequations}\label{eq:Ricci tensor gK_mwp}
	\begin{align}
	&\Ric(X,Y)=\Ric_{B}(X,Y)-\mathcal{H}_{B}^\varphi=\Ric_{B}(X,Y)-(\eta-\zeta) \frac{1}{\varphi^{2}} d \varphi \otimes d \varphi (X,Y) -\zeta \frac{1}{\varphi} H_{B}^{\varphi} (X,Y), \label{eq:Ricci tensor gK_mwp a}\\
	&\hbox{In particular, if } \zeta, \eta \neq 0,  \hbox{ then } \label{eq:Ricci tensor gK_mwp b}  \\
	& \Ric(X,Y)=\Ric_{B}(X,Y)-\beta \frac{ {H}_B^{\varphi^{1/\alpha}}}{\varphi^{1/\alpha}}(X,Y), \nonumber \\
	&\Ric(X,V)=0, \label{eq:Ricci tensor gK_mwp c}\\
	&\Ric(V,W)=0 \hbox{ if } i \neq j, \label{eq:Ricci tensor gK_mwp d}\\
%
%
    &\Ric(V,W)=\Ric_{F_i}(V,W)- p_i\left(\frac{\Delta_B\varphi}{\varphi} +  (\zeta-1)
    \frac{\| \grad_{B}\varphi\|_B^2}{\varphi^{2}}\right)\varphi^{2p_i}
	g_{F_i}(V,W) \hbox{ if } i=j, \label{eq:Ricci tensor gK_mwp e}\\
    &\hbox{In particular, if } \zeta \neq 0,  \hbox{ then } \label{eq:Ricci tensor gK_mwp f}\\
    &\Ric(V,W) = \Ric_{F_i}(V,W) - \frac{p_i}{\zeta} \frac{\Delta_{B} \varphi^\zeta}{\varphi^\zeta} \varphi^{2p_i}
	g_{F_i}(V,W) \hbox{ if } i=j,\nonumber
	\end{align}
\end{subequations}
	where $\Ric$ (respectively, $\Ric_B$ or $\Ric_{F_i}$) is the Ricci curvature tensor of $B \times_{{\vect{\varphi}}^{\vect{p}}} \vect{F}$ (respectively, $(B,g_B)$ or $(F_i,g_{F_i})$), $\alpha=\dfrac{\zeta}{\eta}$ and $\beta=\dfrac{\zeta^{2}}{\eta}$, and $\mathcal{H}_{B}$ is the differential operator on $C_{>0}^{\infty}(B)$ defined by
\begin{equation*}
	\mathcal{H}_{B}^\phi:=\sum_{i=1}^{m} s_{i} \frac{H_{B}^{\phi^{p_{i}}}}{\phi^{p_{i}}}. 
\end{equation*}
\end{cor}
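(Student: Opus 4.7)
The plan is to specialize Proposition~\ref{prp: Ricci mwp} to the case $b_i = \varphi^{p_i}$ and then to systematically apply the identities \eqref{eq:gradient}, \eqref{eq:Lap2}, and \eqref{eq:Hessian2} to rewrite every piece in terms of $\varphi$ alone. The mixed components \eqref{eq:Ricci tensor gK_mwp c} and \eqref{eq:Ricci tensor gK_mwp d} are immediate from \eqref{eq:Ricci tensor mwp b} and \eqref{eq:Ricci tensor mwp c}, so the real work lies in the horizontal $(X,Y)$ sector and in the vertical $(V,W)$ sector with $i=j$.

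For \eqref{eq:Ricci tensor gK_mwp a}, I would start from \eqref{eq:Ricci tensor mwp a} and substitute \eqref{eq:Hessian2} for each term:
\begin{equation*}
\frac{s_i}{b_i} H_B^{b_i} = s_i p_i \left[\frac{H_B^\varphi}{\varphi} + (p_i-1)\frac{d\varphi \otimes d\varphi}{\varphi^2}\right].
\end{equation*}
Summing over $i$ and collecting, one uses $\sum_i s_i p_i = \zeta$ and $\sum_i s_i p_i(p_i - 1) = \eta - \zeta$ to obtain (a), and this identifies $\mathcal{H}_B^\varphi = \zeta \varphi^{-1} H_B^\varphi + (\eta-\zeta)\varphi^{-2}\,d\varphi\otimes d\varphi$. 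For the alternative form \eqref{eq:Ricci tensor gK_mwp b}, I would apply \eqref{eq:Hessian2} in reverse with $q = 1/\alpha = \eta/\zeta$, obtaining
\begin{equation*}
\beta\,\frac{H_B^{\varphi^{1/\alpha}}}{\varphi^{1/\alpha}} = \frac{\zeta^2}{\eta}\cdot\frac{\eta}{\zeta}\left[\frac{H_B^\varphi}{\varphi} + \left(\tfrac{\eta}{\zeta}-1\right)\frac{d\varphi \otimes d\varphi}{\varphi^2}\right] = \zeta \frac{H_B^\varphi}{\varphi} + (\eta-\zeta)\frac{d\varphi \otimes d\varphi}{\varphi^2},
\end{equation*}
which matches $\mathcal{H}_B^\varphi$.

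For \eqref{eq:Ricci tensor gK_mwp e}, I would start from \eqref{eq:Ricci tensor mwp d}. Using \eqref{eq:gradient}, one gets $\|\grad_B b_i\|_B^2/b_i^2 = p_i^2 \|\grad_B \varphi\|_B^2/\varphi^2$ and $g_B(\grad_B b_i, \grad_B b_k)/(b_i b_k) = p_i p_k \|\grad_B \varphi\|_B^2/\varphi^2$; combined with \eqref{eq:Lap2} for $\Delta_B b_i/b_i$, the bracketed factor becomes
\begin{equation*}
p_i \frac{\Delta_B \varphi}{\varphi} + p_i\left[(p_i - 1) + (s_i - 1)p_i + \sum_{k \neq i} s_k p_k\right] \frac{\|\grad_B \varphi\|_B^2}{\varphi^2}.
\end{equation*}
The coefficient in square brackets simplifies to $-1 + s_i p_i + \sum_{k\neq i} s_k p_k = \zeta - 1$, which is precisely the clean constant that appears in (e) once one multiplies by $b_i^2 g_{F_i}(V,W) = \varphi^{2p_i} g_{F_i}(V,W)$. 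Finally, \eqref{eq:Ricci tensor gK_mwp f} follows by reading \eqref{eq:Lap2} with $q = \zeta$ backwards, so that $\Delta_B \varphi/\varphi + (\zeta-1)\|\grad_B \varphi\|_B^2/\varphi^2 = \zeta^{-1} \Delta_B \varphi^\zeta / \varphi^\zeta$.

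The main obstacle is the bookkeeping in the vertical-vertical sector: one must carefully separate the $(p_i - 1)$ term arising from the Laplacian identity \eqref{eq:Lap2} from the $(s_i - 1)$ factor multiplying $\|\grad_B b_i\|_B^2/b_i^2$ in \eqref{eq:Ricci tensor mwp d}, and then correctly absorb the diagonal $s_i p_i^2$ contribution into the off-diagonal sum $\sum_{k\neq i} s_k p_k$ so that it telescopes into the single constant $\zeta - 1$. Everything else is a direct substitution of the three power-rule identities into the already-established \mwp ~formulas.
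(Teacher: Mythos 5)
Your proposal is correct and follows essentially the route the paper intends: the corollary is presented as a direct consequence of Proposition \ref{prp: Ricci mwp} specialized to $b_i=\varphi^{p_i}$ via the power-rule identities \eqref{eq:gradient}, \eqref{eq:Lap2} and \eqref{eq:Hessian2}, which is exactly your substitution, and your coefficient bookkeeping ($\sum_i s_ip_i(p_i-1)=\eta-\zeta$ in the horizontal sector and $(p_i-1)+(s_i-1)p_i+\sum_{k\neq i}s_kp_k=\zeta-1$ in the vertical sector) checks out, as does the reverse application of \eqref{eq:Hessian2} and \eqref{eq:Lap2} with $q=\eta/\zeta$ and $q=\zeta$ for the alternative forms \eqref{eq:Ricci tensor gK_mwp b} and \eqref{eq:Ricci tensor gK_mwp f}.
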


\bigskip

Moreover, we get the following description of the scalar curvature for a $\gK$  manifold.

\begin{cor} \label{cor:ks-2} Let $(M,g)$ be the $\gK$  manifold $ B \times_{{\vect{\varphi}}^{\vect{p}}} \vect{F}$. Then,
its scalar curvature $\tau$ admits the following expression
\begin{equation} \label{eq:gK1}
\displaystyle{\tau=  -2 \zeta \frac{\Delta_B \varphi}{\varphi}
- 
[\zeta^2 + \eta-2\zeta] \frac{\| \grad_{B}\varphi \|_{B}^{2}}
{\varphi^{2}} + \tau_B + \sum_{i=1}^{m}
\frac{\tau_{F_{i}}}{\varphi^{2p_{i}}}}.
\end{equation}
Furthermore, if $\zeta \neq 0$, taking $u=\varphi^{\frac{\zeta^2 + \eta}{2 \zeta}}$,
\begin{equation} \label{eq:gK2}
\displaystyle{-\dfrac{4}{1+\frac{\eta}{\zeta^2}} \Delta_B u + \tau_B u = \tau u - \sum_{i=1}^{m}
\tau_{F_{i}} u^{1-\frac{4}{1+\frac{\eta}{\zeta^2}}\frac{p_i}{\zeta}}}.
\end{equation}

\end{cor}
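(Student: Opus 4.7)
The plan is to derive \eqref{eq:gK1} directly from the general scalar curvature formula of Proposition \ref{prp: scalar curvature mwp} by substituting $b_i=\varphi^{p_i}$ and then grouping terms according to the quantities $\zeta$ and $\eta$. Using \eqref{eq:gradient} we get $g_B(\grad_B\varphi^{p_i},\grad_B\varphi^{p_k}) = p_ip_k\,\varphi^{p_i+p_k-2}\|\grad_B\varphi\|_B^2$, so the last double sum in \eqref{eq:sc_in_du2005_p2.6_1} together with the term $\sum_i s_i(s_i-1)\|\grad_B b_i\|_B^2/b_i^2$ produces the coefficient
\[
\sum_{i}s_i(s_i-1)p_i^2 + \sum_{i}\sum_{k\neq i}s_is_kp_ip_k \;=\; \Big(\sum_i\sum_k s_is_kp_ip_k\Big)-\sum_i s_ip_i^2 \;=\; \zeta^{2}-\eta,
\]
acting on $\|\grad_B\varphi\|_B^2/\varphi^2$. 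Similarly, \eqref{eq:Lap2} gives $\sum_i s_i\,\Delta_B\varphi^{p_i}/\varphi^{p_i} = \zeta\,\Delta_B\varphi/\varphi + (\eta-\zeta)\|\grad_B\varphi\|_B^2/\varphi^2$. Substituting these two contributions back and collecting the coefficient of $\|\grad_B\varphi\|_B^2/\varphi^2$ yields $-2(\eta-\zeta)-(\zeta^2-\eta)=-(\zeta^2+\eta-2\zeta)$, which is exactly \eqref{eq:gK1}.

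For \eqref{eq:gK2}, the natural move is to choose the exponent $q$ in $u=\varphi^q$ so as to kill the difference between the coefficient of $\|\grad_B\varphi\|_B^2/\varphi^2$ produced by $\Delta_B u/u$ and the one appearing in \eqref{eq:gK1}. Since by \eqref{eq:Lap2}
\[
\frac{\Delta_B u}{u} \;=\; q\,\frac{\Delta_B\varphi}{\varphi}+q(q-1)\,\frac{\|\grad_B\varphi\|_B^2}{\varphi^2},
\]
matching simultaneously the ratio between the coefficients of $\Delta_B\varphi/\varphi$ and $\|\grad_B\varphi\|_B^2/\varphi^2$ forces $q(q-1)/q = (\zeta^2+\eta-2\zeta)/(2\zeta)$, i.e.\ $q=(\zeta^2+\eta)/(2\zeta)$, which is exactly the exponent stated for $u$ in the corollary (this is where the hypothesis $\zeta\neq 0$ is needed).

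With this $q$ at hand, I would verify the algebraic identity $\frac{4}{1+\eta/\zeta^2}=\frac{2\zeta}{q}$, so that
\[
-\frac{4}{1+\frac{\eta}{\zeta^{2}}}\,\frac{\Delta_B u}{u} \;=\; -\frac{2\zeta}{q}\Big(q\,\frac{\Delta_B\varphi}{\varphi}+q(q-1)\,\frac{\|\grad_B\varphi\|_B^{2}}{\varphi^{2}}\Big) \;=\; -2\zeta\,\frac{\Delta_B\varphi}{\varphi}-(\zeta^{2}+\eta-2\zeta)\,\frac{\|\grad_B\varphi\|_B^{2}}{\varphi^{2}}.
\]
Multiplying \eqref{eq:gK1} by $u$, substituting the above, and noting that $u/\varphi^{2p_i}=\varphi^{q-2p_i}=u^{1-2p_i/q}$ with $2p_i/q=\frac{4}{1+\eta/\zeta^2}\frac{p_i}{\zeta}$, produces exactly \eqref{eq:gK2}.

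The only genuinely delicate step is the algebraic reduction to $\zeta^2-\eta$ in the first paragraph; everything else is straightforward substitution. In particular the choice of $q$ in the second part is forced, not guessed, which makes the semilinear form of \eqref{eq:gK2} the natural PDE satisfied by $u$.
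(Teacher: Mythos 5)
Your computation is correct and is exactly the route the paper intends: substitute $b_i=\varphi^{p_i}$ into the scalar curvature formula of Proposition \ref{prp: scalar curvature mwp}, use \eqref{eq:gradient} and \eqref{eq:Lap2} to collect the coefficient $-(\zeta^2+\eta-2\zeta)$ of $\|\grad_B\varphi\|_B^2/\varphi^2$, and then absorb the $\varphi$-terms into $\Delta_B u/u$ via the forced exponent $q=(\zeta^2+\eta)/(2\zeta)$. All the algebraic identities you check (the reduction to $\zeta^2-\eta$, $\tfrac{4}{1+\eta/\zeta^2}=\tfrac{2\zeta}{q}$, and $2p_i/q=\tfrac{4}{1+\eta/\zeta^2}\tfrac{p_i}{\zeta}$) are verified and match the stated formulas.
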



The next result is a generalization of \cite[Proposition 4.11]{Dobarro-Unal2005}.

\begin{prp} \label{prp:ks-2} Let $(M,g)$ be the \emph{\gK}  manifold $ B \times_{{\vect{\varphi}}^{\vect{p}}} \vect{F}$. Then it has
constant scalar curvature $\tau$ if and only if
\begin{enumerate}
\item each fiber $(F_i,g_{F_i})$ has constant scalar curvature
$\tau_{F_i}$ $\forall i \in \{1,\cdots,m\},$ and
\item $\tau $ satisfies either \eqref{eq:gK1}, or \eqref{eq:gK2} if $\zeta \neq 0$.
\end{enumerate}
\end{prp}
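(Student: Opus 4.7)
The plan is to deduce both directions directly from the scalar curvature formula \eqref{eq:gK1} of Corollary \ref{cor:ks-2} by exploiting the product structure of $M = B \times F_1 \times \cdots \times F_m$. The key observation is that the right-hand side of \eqref{eq:gK1} splits into a piece depending only on the base coordinate, namely
\[
F(b) := -2\zeta\,\frac{\Delta_B\varphi}{\varphi} - [\zeta^{2}+\eta-2\zeta]\,\frac{\|\grad_B\varphi\|_B^{2}}{\varphi^{2}} + \tau_B,
\]
plus the mixed terms $\sum_i \tau_{F_i}/\varphi^{2p_i}$, in which $\varphi$ depends only on $B$ while each $\tau_{F_i}$ depends only on the corresponding factor $F_i$.

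For the ($\Rightarrow$) direction, I would assume $\tau \equiv c$ on $M$ and separate variables one fiber at a time. For fixed $b_0 \in B$ and fixed $f_j \in F_j$ for each $j \neq i$, equation \eqref{eq:gK1} reduces to
\[
\frac{\tau_{F_i}(f_i)}{\varphi(b_0)^{2p_i}} \;=\; c - F(b_0) - \sum_{j\neq i}\frac{\tau_{F_j}(f_j)}{\varphi(b_0)^{2p_j}},
\]
whose right-hand side is independent of $f_i$. Since $\varphi(b_0) > 0$, this forces $\tau_{F_i}$ to be constant on $F_i$, yielding condition (1). Substituting these constants back into \eqref{eq:gK1} produces the identity on $B$ that is precisely condition (2). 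For the ($\Leftarrow$) direction, if each $\tau_{F_i}$ is a constant and \eqref{eq:gK1} is satisfied pointwise in $B$, then the scalar curvature function given by Corollary \ref{cor:ks-2} is independent of the fiber coordinates (each $\tau_{F_i}/\varphi^{2p_i}$ is then a function of $b$ alone) and independent of $b$ by hypothesis, hence constant on $M$.

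The equivalence between \eqref{eq:gK1} and \eqref{eq:gK2} under the substitution $u = \varphi^{(\zeta^{2}+\eta)/(2\zeta)}$ (valid when $\zeta \neq 0$) is already encoded in the statement of Corollary \ref{cor:ks-2}, and can be recovered by a direct calculation using \eqref{eq:Lap2}; so in that case statement (2) may be rephrased in terms of $u$. The only point requiring care is the separation-of-variables step above: one must vary each fiber \emph{independently} while holding the remaining ones fixed, which is possible exactly because each $\tau_{F_j}$ lives on its own factor and $\varphi$ is constant along every fiber. I expect this step to be the main, though quite mild, obstacle; everything else reduces to invoking Corollary \ref{cor:ks-2}.
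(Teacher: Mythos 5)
Your proposal is correct: the separation-of-variables argument (freeze the base point and all but one fiber coordinate in \eqref{eq:gK1} to force each $\tau_{F_i}$ to be constant, then read \eqref{eq:gK1} as an identity on $B$; the converse is immediate since the right-hand side then depends on $B$ alone) is exactly the natural route, and the equivalence with \eqref{eq:gK2} for $\zeta\neq 0$ is indeed already contained in Corollary \ref{cor:ks-2}. The paper omits the proof, deferring to the analogous Proposition 4.11 of \cite{Dobarro-Unal2005}, and your argument is essentially that one adapted to the multidimensional base; no gaps.
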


\bigskip


\section{Einstein multiple warped manifolds}\label{sec:Einstein}
In this section, we study conditions for a $\mwp$ to be an Einstein manifold and prove Theorem \ref{Thm: PalKu}.

\medskip

From Proposition \ref{prp: Ricci mwp}, we have the following result. 
\begin{cor}\label{cor: Einstein_m}
	The \emph{\mwp} $B\times_{\vect{b}}\vect{F}$ is Einstein with Ric $=\lambda g$ if and only if the next statements are all verified
	\begin{subequations}
		\begin{align}
			&\Ric_{B}=\lambda g_{B}+\sum_{i=1}^m
			\frac{s_i}{b_i}{\rm H}_{B}^{b_i};			\label{Einst-mwp-a}\\
			%
            %
			&\forall i \in \{1,\dots,m\}: \left(F_i, g_{F_i}\right) \text{ is Einstein with } \Ric_{F_i}=\mu_i g_{F_i}; \label{Einst-mwp-b}\\
            %
			%
			&\forall i \in \{1,\dots,m\}: \lambda b_i^2=\mu_i \!-\!\left(b_i\Delta_{B}b_i-
				\|\grad_{B}b_i\|_B^2  +\sum_{k=1}^m
			s_k\frac{b_i}{b_k}g_B\left( \grad_B b_i, \grad_B b_k  \right)\right),
			\label{Einst-mwp-c}
		\end{align}
	\end{subequations}
where $n=\dim B$ and $s_i=\dim F_i$.
\footnote{The sign before the summation in \eqref{Einst-mwp-a} is correct. Compare with \cite{Pal-Kumar2019}, where there is a typo.}

 Furthermore, for each $i$ \eqref{Einst-mwp-c} is equivalent to
\begin{equation}\label{eq:Einst-mwp-c-2}
  b_i^2\left[
  \sum_{k=1}^m
			s_kg_B\left( \frac{\grad_B b_i}{b_i}, \frac{\grad_B b_k}{b_k}  \right)
            +  \frac{\Delta_{B}b_i}{b_i} - \frac{\|\grad_{B}b_i\|_B^2}{b_i^2 } + \lambda \right]
            - \mu_i=0.
\end{equation}
\end{cor}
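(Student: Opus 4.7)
The plan is to read off the Einstein condition $\Ric = \lambda g$ block by block using the explicit Ricci components supplied by Proposition \ref{prp: Ricci mwp}. Since $g = g_B \oplus \bigoplus_i b_i^2 g_{F_i}$, the metric is block diagonal with respect to the decomposition $\mathfrak L(B) \oplus \bigoplus_i \mathfrak L(F_i)$, and the Ricci tensor \eqref{eq:Ricci tensor mwp a}--\eqref{eq:Ricci tensor mwp d} is also block diagonal. The Einstein condition therefore splits into independent conditions on each block, and these conditions must match pointwise in $B\times\vect F$.

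First I would handle the base--base block. For $X,Y \in \mathfrak L(B)$, equating \eqref{eq:Ricci tensor mwp a} with $\lambda g(X,Y) = \lambda g_B(X,Y)$ yields \eqref{Einst-mwp-a}. The mixed blocks $\Ric(X,V)$ and $\Ric(V,W)$ with $V,W$ in distinct fibers are automatically zero by \eqref{eq:Ricci tensor mwp b}--\eqref{eq:Ricci tensor mwp c}, matching $\lambda g(X,V)=0$ and $\lambda g(V,W)=0$ which hold because distinct blocks are $g$-orthogonal, so they impose no new conditions.

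Next I would treat each fiber--fiber block $V,W \in \mathfrak L(F_i)$. Equating \eqref{eq:Ricci tensor mwp d} to $\lambda g(V,W) = \lambda b_i^2 g_{F_i}(V,W)$ gives
\[
\Ric_{F_i}(V,W) = \left[\lambda b_i^2 + \frac{\Delta_B b_i}{b_i} + (s_i-1)\frac{\|\grad_B b_i\|_B^2}{b_i^2} + \sum_{k\neq i} s_k \frac{g_B(\grad_B b_i,\grad_B b_k)}{b_i b_k}\right] b_i^2 g_{F_i}(V,W).
\]
Since $V,W$ are arbitrary vertical lifts from $F_i$ and the bracketed coefficient depends only on the base point, this forces $\Ric_{F_i}$ to be proportional to $g_{F_i}$ with a proportionality constant $\mu_i$ that is \emph{a priori} a function on $B$; evaluated on $F_i$ at a fixed base point, however, it must equal the purely fiber quantity $\mu_i \in \R$, so the bracket must be constant along $B$ and we obtain both \eqref{Einst-mwp-b} and, after multiplying through by $b_i^2$ and isolating the fiber term, the identity \eqref{Einst-mwp-c}. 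Conversely, assuming \eqref{Einst-mwp-a}--\eqref{Einst-mwp-c}, the same three computations reassemble the Einstein equation on every pair of arguments.

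Finally I would derive the equivalent reformulation \eqref{eq:Einst-mwp-c-2} by purely algebraic manipulation: separate out the $k=i$ term from the sum in \eqref{Einst-mwp-c}, use $\frac{b_i}{b_i} g_B(\grad_B b_i,\grad_B b_i) = \|\grad_B b_i\|_B^2$ to combine it with the $-\|\grad_B b_i\|_B^2$ term, divide by $b_i^2$, and rearrange. There is no real obstacle here; the proof is entirely a bookkeeping exercise using Proposition \ref{prp: Ricci mwp}, with the only point requiring care being the sign in \eqref{Einst-mwp-a} (correctly a $+$ in front of the Hessian sum, matching the $-$ in \eqref{eq:Ricci tensor mwp a} after moving it to the other side), which accounts for the footnoted discrepancy with \cite{Pal-Kumar2019}.
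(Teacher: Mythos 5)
Your proposal is correct and is exactly the argument the paper intends: Corollary \ref{cor: Einstein_m} is presented as an immediate consequence of Proposition \ref{prp: Ricci mwp}, obtained by equating $\Ric=\lambda g$ block by block on the $g$-orthogonal decomposition, and your treatment of the fiber blocks (the base-dependent coefficient must match the base-independent tensor $\Ric_{F_i}$, forcing $\Ric_{F_i}=\mu_i g_{F_i}$ with $\mu_i$ constant and yielding \eqref{Einst-mwp-c}) is the right justification, as is the purely algebraic passage to \eqref{eq:Einst-mwp-c-2}. One transcription slip: in your displayed fiber-block equation the first term inside the bracket should be $\lambda$ rather than $\lambda b_i^2$, since the whole bracket is then multiplied by $b_i^2$; the identity \eqref{Einst-mwp-c} you derive from it is nonetheless the correct one.
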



\bigskip

\subsection{Case of one single fiber}
\begin{rem}\label{rem:Einstein-singly-wp} Let a singly warped product $B\times_{\vect{b}}\vect{F}$, i.e. $m=1$, and let $\tau$ its scalar curvature. Let us assume $\lambda \in \mathbb{R}$. We will show
that
\noindent \2 if \eqref{Einst-mwp-b} is verified, then \eqref{Einst-mwp-a} implies
\begin{equation}\label{eq: pre-Einst-wp-c}
\lambda (n + s_1)  - \tau =  s_1 \dfrac{1}{b_1^2}\left[  b_1^2 \left(\dfrac{\Delta_B b_1}{b_1} + (s_1 -1)
\dfrac{\|\grad_B  b_1\|_B^{2}}{b_1^2} +\lambda\right)-   \mu_1\right],
\end{equation}
but the latter is equivalent to \eqref{Einst-mwp-c} if and only if $\tau=(n+s_1)\lambda$".

\noindent Note that \eqref{eq: pre-Einst-wp-c} expresses the difference between the hypothetical scalar curvature of $B\times_{\vect{b}}\vect{F}$, if it were Einstein, and the scalar curvature of $B\times_{\vect{b}}\vect{F}$ under the single information that \eqref{Einst-mwp-a} and \eqref{Einst-mwp-b} are verified.

\bigskip

\noindent Indeed, on one hand, by \eqref{eq:sc_in_du2005_p2.6_1}, we have
\begin{equation}\label{eq:sc_in_du2008_Ex.2.6}
   \tau = -2s_1 \dfrac{\Delta_B b_1}{b_1} - s_1(s_1 -1) \dfrac{\|\grad_B  b_1\|_B^{2}}{b_1^2} + \tau_B + \dfrac{\tau_{F_1}}{b_1^2}.
\end{equation}

\noindent On the other hand, taking the $g_B$-trace in \eqref{Einst-mwp-a}, we get
\begin{equation}\label{eq:tau_B-Einstein_F_1}
\tau_B = \lambda n + s_1 \dfrac{\Delta_B b_1}{b_1}.
\end{equation}

\noindent Then, by \eqref{eq:sc_in_du2008_Ex.2.6} and \eqref{Einst-mwp-b}, it holds

\begin{equation*}
\tau = -2s_1 \dfrac{\Delta_B b_1}{b_1} - s_1(s_1 -1) \dfrac{\|\grad_B  b_1\|_B^{2}}{b_1^2} + \lambda n + s_1 \dfrac{\Delta_B b_1}{b_1} + \dfrac{s_1 \mu_1}{b_1^2}.
\end{equation*}
Thus
\begin{equation*}
\lambda n  - \tau =  s_1 \left[  \dfrac{\Delta_B b_1}{b_1} + (s_1 -1) \dfrac{\|\grad_B  b_1\|_B^{2}}{b_1^2} - \dfrac{\mu_1}{b_1^2}\right],
\end{equation*}
or equivalently
\begin{align*}
\lambda (n + s_1)  - \tau &=  s_1 \dfrac{1}{b_1^2}\left[  b_1^2 \left(\dfrac{\Delta_B b_1}{b_1} + (s_1 -1)
\dfrac{\|\grad_B  b_1\|_B^{2}}{b_1^2} +\lambda\right)-   \mu_1\right]
%
.
\end{align*}

\noindent Therefore, \eqref{Einst-mwp-a} implies \eqref{eq: pre-Einst-wp-c}, which is equivalent to \eqref{Einst-mwp-c} if and only if $\tau =(n+s_1)\lambda$.
\footnote{
We observe that up to here, our proof requires $(B^n,g_B)$ to be only pseudo-Riemannian.\\
If we denote $\gamma := b_1^2 \left(\dfrac{\Delta_B b_1}{b_1} + (s_1 -1)
\dfrac{\|\grad_B  b_1\|_B^{2}}{b_1^2} +\lambda\right)$, then
\begin{align*}
\lambda(n+s_1) - \tau = -s_1 (\gamma -\mu_1)\frac{1}{b_1^2}.
\end{align*}
\\
Under the hypothesis that $(B^n,g_B)$ is compact and Riemannian,
in \cite[Proposition 5]{Kim-Kim2003} were proved, applying Bianchi identity,  that $\gamma $ is constant. As consequence, if $\mu_1 = \gamma$, then $\tau = \lambda (n+s_1)$ and then the \swp ~$B\times_{b_1}F_1$ results Einstein.\\
}
\end{rem}

\medskip

\begin{cor}\label{cor: 2 Einstein_m}
	The \emph{\swp} $B\times_{b_1} F_1$ is Einstein with Ric $=\lambda g$ if and only if
	\begin{subequations}
		\begin{align}
			&\Ric_{B}=\lambda g_{B}+
			\frac{s_1}{b_1}{\rm H}_{B}^{b_1};			\label{2 Einst-mwp-a}\\
			&\left(F_1, g_{F_1}\right) \text{ is Einstein with } \operatorname{Ric}_{F_1}=\mu_1 g_{F_1}; \label{2 Einst-mwp-b}\\
			&\tau =(n+s_1)\lambda,
			\label{2 Einst-mwp-c}
	\end{align}
	\end{subequations}
are all verified, where $n=\dim B$ and $s_1=\dim F_1$.

\medskip

\noindent In other words, \2a singly warped product $B\times_{b_1} F_1$ of constant scalar curvature $\tau$ with Einstein fiber $F_1$" results Einstein if and only if
\begin{equation}\label{eq: 3 Einst-mwp-a}
  \Ric_{B}=\frac{\tau}{n + s_1} g_{B}+
			\frac{s_1}{b_1}{\rm H}_{B}^{b_1}.			
\end{equation}
\end{cor}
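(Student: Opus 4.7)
The plan is to derive Corollary \ref{cor: 2 Einstein_m} directly from Corollary \ref{cor: Einstein_m} specialized to $m=1$, by replacing condition \eqref{Einst-mwp-c} with the scalar-curvature condition $\tau=(n+s_1)\lambda$ via the computation already carried out in Remark \ref{rem:Einstein-singly-wp}.

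For the forward implication I would apply Corollary \ref{cor: Einstein_m} with $m=1$ to obtain \eqref{2 Einst-mwp-a} and \eqref{2 Einst-mwp-b} immediately, together with \eqref{Einst-mwp-c}. For \eqref{2 Einst-mwp-c}, I take the $g$-trace of the Einstein equation $\Ric=\lambda g$ on the total space; since $\dim(B\times_{b_1}F_1)=n+s_1$, this yields $\tau=(n+s_1)\lambda$.

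For the reverse implication the only non-trivial task is to recover \eqref{Einst-mwp-c} from \eqref{2 Einst-mwp-a}, \eqref{2 Einst-mwp-b} and \eqref{2 Einst-mwp-c}. Following Remark \ref{rem:Einstein-singly-wp}, I combine the scalar-curvature formula of Proposition \ref{prp: scalar curvature mwp} (for $m=1$) with the $g_B$-trace of \eqref{2 Einst-mwp-a}, which gives $\tau_B=\lambda n + s_1\,\Delta_B b_1/b_1$, and with the fiber relation $\tau_{F_1}=s_1\mu_1$ obtained from \eqref{2 Einst-mwp-b}. A short manipulation yields the identity
\[
\lambda(n+s_1)-\tau \;=\; \frac{s_1}{b_1^{2}}\Bigl[b_1^{2}\Bigl(\frac{\Delta_B b_1}{b_1}+(s_1-1)\frac{\|\grad_B b_1\|_B^{2}}{b_1^{2}}+\lambda\Bigr)-\mu_1\Bigr].
\]
The hypothesis \eqref{2 Einst-mwp-c} makes the left-hand side vanish, forcing the bracketed expression on the right to vanish, which is precisely \eqref{Einst-mwp-c} for $m=1$. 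Corollary \ref{cor: Einstein_m} then delivers the Einstein condition on $B\times_{b_1}F_1$.

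For the final reformulation, assuming constant scalar curvature $\tau$ and Einstein fiber $(F_1,g_{F_1})$, I set $\lambda:=\tau/(n+s_1)$; then \eqref{eq: 3 Einst-mwp-a} is exactly \eqref{2 Einst-mwp-a} for this value of $\lambda$, while \eqref{2 Einst-mwp-c} holds by construction, so the previous step applies. The converse direction extracts $\lambda=\tau/(n+s_1)$ from the same trace argument. There is no substantive technical obstacle in the proof; the only subtle point, already flagged in Remark \ref{rem:Einstein-singly-wp}, is that the displayed identity obtained from \eqref{2 Einst-mwp-a} and \eqref{2 Einst-mwp-b} alone is \emph{not} equivalent to \eqref{Einst-mwp-c} in general, but it becomes equivalent to it precisely when $\tau=(n+s_1)\lambda$. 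This is exactly what distinguishes the single-fiber case from the $m\ge 2$ situation treated in Theorem \ref{Thm: PalKu}.
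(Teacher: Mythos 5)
Your proposal is correct and follows essentially the same route as the paper: the paper's justification of this corollary is precisely the computation carried out in Remark \ref{rem:Einstein-singly-wp} (combining the scalar-curvature formula \eqref{eq:sc_in_du2005_p2.6_1} for $m=1$ with the $g_B$-trace of \eqref{2 Einst-mwp-a} and $\tau_{F_1}=s_1\mu_1$ to obtain \eqref{eq: pre-Einst-wp-c}, whose right-hand side vanishes exactly when $\tau=(n+s_1)\lambda$), together with Corollary \ref{cor: Einstein_m} specialized to one fiber. Your handling of the reformulation via $\lambda:=\tau/(n+s_1)$ matches the paper's intent as well.
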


\begin{rem}\label{rem:why must be positive}$~\!\!\!$[Why $\lambda$ and $\mu_1$ must be positive?] We observe that in Remark \ref{rem:Einstein-singly-wp}, \eqref{eq: pre-Einst-wp-c} could be written as
\begin{align*}
\lambda (n + s_1)  - \tau &= s_1 \dfrac{1}{b_1^2}\left[  b_1^2 \left(\frac{1}{s_1}\dfrac{\Delta_B b_1^{s_1}}{b_1^{s_1}}
                          +\lambda \right) - \mu_1 \right]\\
                          &= \dfrac{\Delta_B b_1^{s_1}}{b_1^{s_1}}+ s_1 \lambda - s_1 \mu_1 \dfrac{1}{b_1^2},
\end{align*}
or equivalently
\begin{align*}
-\Delta_B \psi -  s_1 \lambda \psi + s_1 \mu_1 \psi^{1 -\frac{2}{s_1}} = [\tau - \lambda (n + s_1)] \psi,
\end{align*}
where $0<\psi = b_1^{s_1}$. Thus, in Corollary \ref{cor: 2 Einstein_m}, \eqref{2 Einst-mwp-c} is equivalent to
\begin{align*}
-\Delta_B \psi -  s_1 \lambda \psi + s_1 \mu_1 \psi^{1 -\frac{2}{s_1}} = 0.
\end{align*}
\textcolor{black}{If $B$ is \emph{compact and Riemannian}, integrating over $B$ in the equation above yields that $\lambda$ and $\mu_1$ have the same sign. However, according to \cite[Theorem 1.1]{Kim-Kim2003}, if $b_1$ is not constant, then $\lambda$ must be positive, which implies that $\mu_1$ is also positive. Therefore, this remark enhances Corollary \ref{cor: 2 Einstein_m}.}
\end{rem}

\begin{rem}\label{divergence-thm} Now, we recall a classical result verified only for non null functions.

\noindent For $f\in C^\infty_{>0} (N)$, with $(N,g_N)$ a \emph{compact Riemannian} manifold without boundary, it holds
\footnote{
%
%
%
$\diver(u \grad v )= \delta (u dv) = -u \Delta v - g(\grad u,\grad v)$, where
$\delta $ is the codifferential, \cite[p.184]{Gallot-Hulin-Lafontaine2004}, $\Delta$ defined in \eqref{eq:Lap-Bel} and $\diver X = \delta X^\flat $, with $X^\flat$ the dual 1-form of the vector field $X$ (i.e. $-$ the usual divergence in $\mathbb{R}^n$).
}
\[
 \operatorname{div}_{N}\left(\frac{\grad_N f}{f}\right)=
 %
 %
 -\frac{\Delta_N f}{f}+\left\|\frac{\grad_N f}{f}\right\|_N^2.
\]
And by the divergence theorem, we have
\[
0=\int_{N} \operatorname{div}_{N}\left(\frac{\grad_N f}{f}\right)\,dv_{g_N}=-\int_{N} \frac{\Delta_N f}{f}\,dv_{g_N}+\int_{N}\left\|\frac{\grad_N f}{f}\right\|_N^2\,dv_{g_N},
\]
so
\[
    \int_{N}\frac{\Delta_N f}{f} \,dv_{g_N}= \int_{N}\left\|\frac{\grad_N f}{f}\right\|_N^2 \,dv_{g_N}.
\]

%
\end{rem}

\bigskip

\begin{rem}\label{rem:dl} Let $B$ be a \emph{compact and Riemannian manifold}. Now, we observe that equation \eqref{2 Einst-mwp-c} in Corollary \ref{cor: 2 Einstein_m}, corresponds to the following equation in \cite{dobarro-lamidozo1987}
\begin{equation} \label{eq:wp-const-posit-sc}
-\frac{4 s_1}{s_1 +1} \Delta_B u + \tau_B u + s_1\mu_1 u^{\frac{s_1 -3 }{s_1 + 1}} = \lambda (n + s_1) u,
\end{equation}
where $0<u=b_1^{\frac{s_1 +1}{2}}$. In that paper, the authors look for positive solutions in a compact Riemannian manifold, to obtain warped products with constant scalar curvature, in this case $\lambda (n + s_1)$. By Remark \ref{rem:why must be positive}, \textcolor{black}{ in Corollary \ref{cor: 2 Einstein_m}} we are interested in the case $\mu_1 >0$ and $\lambda >0$. As consequence of the results in \cite{dobarro-lamidozo1987},
the positivity of $\mu_1$ implies that $\lambda (n + s_1) > \nu_0$ where the latter is the principal eigenvalue of the operator
$ -\dfrac{4 s_1}{s_1 +1} \Delta  + \tau_B $. So, $\lambda > \max \left\{0,\dfrac{\nu_0}{n+s_1}\right\}$.

\noindent On the other side, applying Remark \ref{divergence-thm} to equation \eqref{eq:tau_B-Einstein_F_1}, there results
$\displaystyle \lambda \leq \frac{1}{n|B|}\int_B \tau_B \,dv_{g_B}$.

\noindent Hence,
if the \swp~ $B\times_{b_1} F_1$ is Einstein with Ric $=\lambda g$, then
\begin{equation}
  \max \left\{0, \dfrac{\nu_0}{n+s_1}\right\} < \lambda \leq \frac{1}{n|B|}\int_B \tau_B \,dv_{g_B}.
\end{equation}
Note that if $s_1 \geq 3$,
\begin{equation*}
  \nu_0 = \inf \left\{ \int_B  \!\!\left ( \dfrac{4 s_1}{s_1 +1}\|\grad_B \psi \|_B^2 + \tau_B \psi^2 \right )\!\,dv_{g_B}; \psi \in H^1(B), \int_B \!\psi^2 \,dv_{g_B}=1  \right \}   \leq \nu_1,
\end{equation*}

\noindent where
\begin{equation*}
    \nu_1 :=  \inf \left\{ \int_B  \!\!\left ( s_1\|\grad_B \psi \|_B^2 + \tau_B \psi^2 \right )\!\,dv_{g_B}; \psi \in H^1(B), \int_B \!\psi^2 \,dv_{g_B}=1  \right \} \leq \frac{1}{|B|}\int_B \!\tau_B \,dv_{g_B},
\end{equation*}
and by \eqref{eq:tau_B-Einstein_F_1} $\nu_1 = \lambda n$.  Furthermore, $\nu_0$ could be negative,  even if the total scalar curvature of $B$ is positive. So, if this were the case, the positive solutions of \eqref{eq:wp-const-posit-sc} obtained in \cite{dobarro-lamidozo1987} with $\lambda (n+s_1)> \nu_0$ and sufficiently close to $\nu_0$, would not correspond to Einstein manifolds. Consequently, these solutions would not satisfy the system \eqref{2 Einst-mwp-a} outlined in Corollary \ref{cor: 2 Einstein_m}.

\noindent Note that if the dimension of the fiber is $s_1=3$, then $\nu_0 = \nu_1$. The question remains whether $\nu_0 < \nu_1$ when $s_1 > 3$. Proving this would be an interesting problem to deal with.

\end{rem}

\begin{description}
  \item[Problem] For the example in Remark \ref{rem:positive-total-sc}, are $\nu_0$ or $\nu_1$ negative?
\end{description}

\bigskip

\subsection{Case of multiple fibers}
$ ~ $

Now, we consider an analogous problem,
but with at least two fibers.

\begin{thm}\label{Theor: DR}
Let $\left(B^{n}, g_{B}\right)$ be a pseudo-Riemannian manifold of dimension $n \geqq 2$; $m\geq 1$ functions $b_i \in C^\infty _{>0}$ satisfying \eqref{Einst-mwp-a} for a function $\lambda \in C^\infty (B)$ and positive integers $s_i$,  $i=1,\dots,m$. If for each $i \in \{1,2, \dots, m\}$ $F_i$ is a Einstein manifold of dimension $s_i$, i.e. $\Ric_{F_i} = \mu_i g_{F_i}$ with $\mu_i \in \mathbb{R}$, then
\begin{align}\label{eq:quasiEins}
\sum_{i=1}^m s_i \dfrac{1}{b_i^2} A_i
= \lambda \left(n +\sum_{i=1}^m s_i\right) - \tau ,
\end{align}
where
\begin{align}\label{eq:A_i}
&
A_i:=b_i^2 \left(
\sum_{\begin{subarray}{l}k=1\end{subarray}}^m
s_k  g_{B}\left(\dfrac{\grad_{B}b_i}{b_i},\frac{\grad_{B}b_k}{b_k}\right)+
\frac{\Delta_{B}b_i}{b_i} -
\frac{\| \grad_{B}b_i \|_{B}^{2}}{b_i^2}{\frac{}{}}
+ \lambda\right) -
\mu_i,
\end{align}
and
$\tau$ is the scalar curvature of the \emph{\mwp} ~ $B\times_{\vect{b}}\vect{F}$.
\end{thm}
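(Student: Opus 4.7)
The plan is to derive the stated identity purely by tracing the base equation \eqref{Einst-mwp-a} and comparing the result with the scalar curvature formula \eqref{eq:sc_in_du2005_p2.6_1} from Proposition \ref{prp: scalar curvature mwp}. No Bianchi or divergence identities are needed here; the theorem is essentially an algebraic identity between two different expressions for the same quantity.

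First I would take the $g_B$-trace of \eqref{Einst-mwp-a}. Since $\trace_{g_B}{\rm H}_B^{b_i}=\Delta_B b_i$, this yields
\begin{equation*}
\tau_B \;=\; \lambda\, n \;+\; \sum_{i=1}^m \frac{s_i}{b_i}\,\Delta_B b_i,
\end{equation*}
an expression for the scalar curvature of the base in terms of $\lambda$ and the warping functions. Next, I would substitute this into the \mwp{} scalar curvature formula \eqref{eq:sc_in_du2005_p2.6_1}, using the Einstein hypothesis on the fibers to replace $\tau_{F_i}$ by $s_i \mu_i$. After simplification of the $\Delta_B b_i / b_i$ terms (one contribution from $\tau_B$, and the $-2\sum s_i \Delta_B b_i/b_i$ already present in \eqref{eq:sc_in_du2005_p2.6_1}) this gives a clean formula
\begin{equation*}
\lambda\!\Big(n+\!\sum_i s_i\Big) - \tau \;=\; \sum_i \frac{s_i \Delta_B b_i}{b_i} + \sum_i s_i(s_i-1)\frac{\|\grad_B b_i\|_B^2}{b_i^2}
+ \sum_i \sum_{k\neq i} s_i s_k \frac{g_B(\grad_B b_i,\grad_B b_k)}{b_i b_k} + \lambda\!\sum_i s_i - \sum_i \frac{s_i \mu_i}{b_i^2}.
\end{equation*}

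In parallel I would expand $\sum_i s_i A_i/b_i^2$ directly from the definition \eqref{eq:A_i}. The only subtlety is the double sum $\sum_i s_i \sum_k s_k\, g_B(\grad_B b_i,\grad_B b_k)/(b_i b_k)$, which I would split into its diagonal part $\sum_i s_i^{2}\|\grad_B b_i\|_B^{2}/b_i^{2}$ and its off-diagonal part $\sum_i\sum_{k\neq i} s_i s_k\, g_B(\grad_B b_i,\grad_B b_k)/(b_i b_k)$. Combining the diagonal piece with the $-\,s_i\|\grad_B b_i\|_B^2/b_i^2$ term from \eqref{eq:A_i} produces exactly $s_i(s_i-1)\|\grad_B b_i\|_B^2/b_i^2$, matching the above expression for $\lambda(n+\sum s_i)-\tau$ term by term.

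I expect no real obstacle: the whole argument is bookkeeping. The one step that requires care is the double-sum splitting, since the off-diagonal cross terms $g_B(\grad_B b_i,\grad_B b_k)$ with $k\neq i$ must cancel on both sides of the identity, and these are precisely the terms that distinguish the multi-fiber case $m\geq 2$ from the single-fiber case in Remark \ref{rem:Einstein-singly-wp}. Once these cross terms are correctly isolated, the identity \eqref{eq:quasiEins} follows immediately. Note that throughout the argument $(B,g_B)$ need only be pseudo-Riemannian and $\lambda$ may be a smooth function, in agreement with the stated hypotheses.
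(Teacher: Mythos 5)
Your proposal is correct and follows essentially the same route as the paper's proof: trace \eqref{Einst-mwp-a} to get $\tau_B=\lambda n+\sum_i s_i\Delta_B b_i/b_i$, substitute into \eqref{eq:sc_in_du2005_p2.6_1} with $\tau_{F_i}=s_i\mu_i$, and absorb the $-s_i(s_i-1)\|\grad_B b_i\|_B^2/b_i^2$ terms into the full double sum to recover $\sum_i s_i A_i/b_i^2$. The only difference is the direction in which you read the final bookkeeping step, which is immaterial.
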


\begin{rem}
Note that if $\lambda $ is constant, \eqref {eq:A_i} is the left-hand side of \eqref{eq:Einst-mwp-c-2}.
 Moreover, in Theorem \ref{Theor: DR}, if $m=1$, then \eqref{eq:quasiEins} coincides with \eqref{eq: pre-Einst-wp-c}.
\end{rem}

\begin{proof}[Proof of Theorem \ref{Theor: DR}]

By \eqref{eq:sc_in_du2005_p2.6_1}, we have
\begin{align}\label{eq:sc_in_du2005_p2.6_1 2 0}
\tau  = & \tau_{B}-2\sum_{i=1}^m s_i \frac{\Delta_{B}b_i}{b_i}
+\sum_{i=1}^m \frac{\tau_{F_i}}{b_{i}^2}-
\sum_{i=1}^m s_i(s_i-1)\frac{\| \grad_{B}b_i \|_{B}^{2}}{b_{i}^2}\\
& -  \sum_{i=1}^m
\sum_{\begin{subarray}{l}k=1\\k\neq i\end{subarray}}^m
s_k s_i \frac{g_{B}(\grad_{B}b_i,\grad_{B}b_k)}{b_i b_k}.\nonumber
\end{align}

\noindent Taking the $g_B$-trace in \eqref{Einst-mwp-a}, it follows
\begin{equation}
\tau_B =  \lambda n + \sum_{i=1}^m
			{s_i}\dfrac{\Delta_{B}b_i}{b_i}
\end{equation}
Then, by \eqref{eq:sc_in_du2005_p2.6_1 2 0} and since $\Ric_{F_i} = \mu_i g_{F_i}$, there results

\begin{align*}
\tau - \lambda n  =&
-\sum_{i=1}^m s_i \frac{\Delta_{B}b_i}{b_i}
+\sum_{i=1}^m \frac{s_i \mu_i}{b_{i}^2}-
\sum_{i=1}^m s_i(s_i-1)\frac{\| \grad_{B}b_i \|_{B}^{2}}{b_{i}^2}\\
& -  \sum_{i=1}^m
\sum_{\begin{subarray}{l}k=1\\k\neq i\end{subarray}}^m
s_k s_i \frac{g_{B}(\grad_{B}b_i,\grad_{B}b_k)}{b_i b_k},\nonumber
\end{align*}
or equivalently

\begin{align}\label{eq:sc_mwp_Einstein_fibers}
\tau - \lambda n  =&
-\sum_{i=1}^m s_i \frac{\Delta_{B}b_i}{b_i}
+\sum_{i=1}^m \frac{s_i \mu_i}{b_{i}^2}
+\sum_{i=1}^m s_i \frac{\| \grad_{B}b_i \|_{B}^{2}}{b_{i}^2}\\
& -  \sum_{i=1}^m
\sum_{\begin{subarray}{l}k=1\end{subarray}}^m
s_k s_i \frac{g_{B}(\grad_{B}b_i,\grad_{B}b_k)}{b_i b_k},\nonumber
\end{align}
Finally, adding $-\lambda \sum_{i=1}^m s_i$ to both sides of \eqref{eq:sc_mwp_Einstein_fibers} and applying \eqref{eq:A_i}, we get \eqref{eq:quasiEins}.
%
%

\end{proof}

The latter Theorem \ref{Theor: DR} implies that: \eqref{Einst-mwp-c} is sufficient for $\tau = \lambda \left(n +\sum_{i=1}^m s_i\right)$. And, if $\tau = \lambda \left( n +\sum_{i=1}^m s_i \right)$, then
\begin{equation}
\label{eq:weak-eins-mwp2}
\sum_{i=1}^m s_i \dfrac{1}{b_i^2}A_i
= 0
\end{equation}
\noindent Note the analogy with \eqref{eq: pre-Einst-wp-c}. But unlike $m=1$, for $m \geq 2$, it is not possible to conclude from \eqref{eq:weak-eins-mwp2}
that for each $i \in \{1, \dots, m\}$ $A_i$ is null, unless for instance, they all have the same sign. So, without any additional hypothesis, we cannot conclude \eqref{Einst-mwp-c}.
%
\begin{cor}\label{cor: pre-Einstein_m}
Let $B\times_{\vect{b}}\vect{F}$ be a \emph{\mwp} such that each fiber $F_i$ of dimension $s_i$ is Einstein with scalar curvature $\tau_{F_i}  = s_i \mu_i$, with $\mu_i$ constant. Let $\lambda$ be a function in $C^\infty (B)$ also. Hence, \eqref{Einst-mwp-a} is sufficient for \eqref{eq:quasiEins}. In addition, if $\tau = \lambda \left(n +\sum_{i=1}^m s_i\right)$, then \eqref{Einst-mwp-a} is sufficient for \eqref{eq:weak-eins-mwp2}.
On the other hand, $B\times_{\vect{b}}\vect{F}$ Einstein with $\Ric = \lambda g$ is sufficient for
\eqref{eq:weak-eins-mwp2}.
\end{cor}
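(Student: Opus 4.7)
The plan is to obtain the three statements as direct consequences of Theorem~\ref{Theor: DR} and Corollary~\ref{cor: Einstein_m}; no new computations are required, only a careful chaining of hypotheses. The corollary is essentially the organisational summary of what has already been set up, and its proof is really three short observations.

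First, I would verify that the hypotheses of Theorem~\ref{Theor: DR} are subsumed by those of the corollary. The ambient convention of the paper fixes $B$ as semi-Riemannian, and the assumption that each fiber is Einstein with $\tau_{F_i}=s_i\mu_i$ and $\mu_i\in\mathbb{R}$ is exactly $\Ric_{F_i}=\mu_i g_{F_i}$. Combined with \eqref{Einst-mwp-a} and $\lambda\in C^\infty(B)$, all the hypotheses of Theorem~\ref{Theor: DR} hold, so \eqref{eq:quasiEins} follows, which is the first claim.

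For the second assertion, assume additionally that $\tau=\lambda(n+\sum_{i=1}^m s_i)$. Substituting this into the right-hand side of \eqref{eq:quasiEins} makes that side identically zero, and the identity collapses to \eqref{eq:weak-eins-mwp2}. For the third assertion, if $B\times_{\vect{b}}\vect{F}$ is Einstein with $\Ric=\lambda g$, then by Corollary~\ref{cor: Einstein_m} the three conditions \eqref{Einst-mwp-a}, \eqref{Einst-mwp-b}, \eqref{Einst-mwp-c} all hold; the last one, read through \eqref{eq:Einst-mwp-c-2}, says precisely that $A_i=0$ for every $i$ (compare with \eqref{eq:A_i}), so the sum in \eqref{eq:weak-eins-mwp2} vanishes term by term. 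Alternatively, one may trace $\Ric=\lambda g$ to obtain $\tau=\lambda(n+\sum_i s_i)$ and then invoke the second part.

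There is no genuine obstacle: the content of the corollary is bookkeeping rather than a new theorem. The subtle point, worth flagging but not itself part of the proof, is the non-reversibility already emphasised in the paragraph preceding the statement — for $m\geq 2$, the single scalar identity \eqref{eq:weak-eins-mwp2} does not imply the full system $A_i=0$ for every $i$ without an additional hypothesis (for instance, all $A_i$ having the same sign), which is exactly what makes the multi-fibered case genuinely different from the $m=1$ case treated in Remark~\ref{rem:Einstein-singly-wp}.
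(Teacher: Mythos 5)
Your proposal is correct and follows essentially the same route as the paper, which presents this corollary as a direct consequence of Theorem~\ref{Theor: DR} together with Corollary~\ref{cor: Einstein_m} (the reasoning appearing in the discussion paragraph immediately preceding the statement). Your closing observation about the non-reversibility of \eqref{eq:weak-eins-mwp2} for $m\geq 2$ is exactly the point the paper emphasises there as well.
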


\smallskip
In \cite{Kim-Kim2003} it was shown that if a compact base warped product $B \times_{b_1} F_1$ is Einstein with $\Ric=\lambda g $ and $\lambda\leq 0$, then the warped product is trivial (see \cite{Rimoldi2010,Case2010} for the case of a non-compact base). Next, we extend this result by considering the case where $\lambda>0$, and present an analogous result for $\mwp$.

\begin{lem}\label{lem: lambda estimates mwp}
    Let $(B,g_B)$ be a compact Riemannian manifold of dimension $n$, and $(F_i, g_{F_i})$ Einstein pseudo-Riemannian manifolds of dimension $s_i$ with $\Ric_{F_i}=\mu_i g_{F_i}$, and let $b_i \in C^\infty_{>0}(B)$, $i=1, 2, \dots, m$. Assume that the \emph{\mwp}~ $B\times_{\vect{b}}\vect{F}$ is an Einstein manifold with $\Ric=\lambda g$, with $\lambda$ constant and scalar curvature $\tau = (n + \sum _{i=1}^m s_i)\lambda$ also. Then, for each $i$, \2$\lambda = 0 \Leftrightarrow \mu_i=0$" and \2$\lambda > 0 \Leftrightarrow \mu_i > 0$".
    Furthermore,
    \begin{itemize}
        \item[\bf(i)] $\lambda = 0$ if and only if $\forall i: \mu_i=0$. 

  In addition, the total scalar curvature of $B$ is non-negative. And the total scalar curvature of $B$ is zero if and only if all the $b_i$s are constant, and so $B\times_{\vect{b}}\vect{F}$ is trivial and Ricci flat.

 In the case of one fiber ($m=1$), $b_1$ results constant, and so $B$ and $F_1$ are Ricci flat. As a consequence 
 $B\times_{b_1} F_1$ is trivial and Ricci flat.

 \smallskip

        \item[\bf(ii)] $\lambda <0 $ if and only if $\forall i: \mu_i <0$ and so $\forall i: b_i$ is constant.

        So, $B$ results Einstein with $\tau_B = \lambda n <0$, for each $i$ is $b_i^2=\dfrac{\mu_i}{\lambda}$ and $B\times_{\vect{b}}\vect{F}$ is trivial.

        \smallskip

    \item[\bf(iii)] $\lambda >0 $ if and only if $\forall i: \mu_i >0$. 
    
    Hence, $\forall i: 0<(\min_B b_i)^2 \le \dfrac{\mu_i}{\lambda} \le (\max_B b_i)^2$.
    \end{itemize}

    \noindent Furthermore,
\begin{equation}\label{eq: lambda estimate mwp}
  \lambda \leq \frac{1}{n |B|}\int_B \tau_B \,dv_{g_B}.
\end{equation}
This inequality is strict if at least one $b_i$ is non-constant. Moreover, the equality is true if and only if all the $b_i$s are constant.
\end{lem}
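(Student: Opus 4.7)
The proof combines two elementary ingredients: the integrated $g_B$-trace of \eqref{Einst-mwp-a} processed through the divergence identity of Remark \ref{divergence-thm}, and a pointwise maximum/minimum-principle analysis of equation \eqref{Einst-mwp-c} of Corollary \ref{cor: Einstein_m} applied separately to each warping function $b_i$. Everything in the statement drops out of these two inputs with essentially no further computation.

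\textbf{The integral bound \eqref{eq: lambda estimate mwp}.} Tracing \eqref{Einst-mwp-a} with respect to $g_B$ yields $\tau_B = \lambda n + \sum_{i=1}^m s_i \Delta_B b_i / b_i$. Integrating over the compact Riemannian base $B$ and replacing each $\int_B \Delta_B b_i / b_i \,dv_{g_B}$ by $\int_B \|\grad_B b_i / b_i\|_B^2 \,dv_{g_B}$ via Remark \ref{divergence-thm} gives
\[
\int_B \tau_B \,dv_{g_B} - \lambda n |B| \;=\; \sum_{i=1}^m s_i \int_B \Big\Vert \tfrac{\grad_B b_i}{b_i}\Big\Vert_B^2 \,dv_{g_B} \;\geq\; 0,
\]
which is \eqref{eq: lambda estimate mwp}; since each summand on the right is non-negative, equality is equivalent to every $b_i$ being constant.

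\textbf{Pointwise analysis of \eqref{Einst-mwp-c} and triviality.} At a point where $b_i$ attains its maximum on $B$, $\grad_B b_i = 0$, so every cross term $g_B(\grad_B b_i, \grad_B b_k)$ in \eqref{Einst-mwp-c} vanishes there (regardless of what the other $b_k$ do), while $\Delta_B b_i \leq 0$; hence \eqref{Einst-mwp-c} collapses to $\mu_i \leq \lambda (\max_B b_i)^2$. The analogous argument at a minimum of $b_i$ gives $\mu_i \geq \lambda (\min_B b_i)^2$. From these two bounds I read off at once the per-$i$ sign equivalences $\lambda > 0 \Leftrightarrow \mu_i > 0$, $\lambda < 0 \Leftrightarrow \mu_i < 0$, $\lambda = 0 \Leftrightarrow \mu_i = 0$, together with the explicit bound $(\min_B b_i)^2 \leq \mu_i/\lambda \leq (\max_B b_i)^2$ of case (iii). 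When $\lambda < 0$, dividing $\lambda (\min_B b_i)^2 \leq \mu_i \leq \lambda (\max_B b_i)^2$ by the negative number $\lambda$ reverses the inequalities and forces $\min_B b_i = \max_B b_i$, so each $b_i$ is constant with $b_i^2 = \mu_i/\lambda$; then the Hessian terms in \eqref{Einst-mwp-a} vanish and one obtains $\Ric_B = \lambda g_B$ with $\tau_B = \lambda n < 0$, which is (ii). For the one-fiber sub-case of (i), \eqref{Einst-mwp-c} with $\lambda = \mu_1 = 0$ reduces to $b_1 \Delta_B b_1 + (s_1-1)\|\grad_B b_1\|_B^2 = 0$, equivalently $\Delta_B b_1^{s_1} = 0$; a harmonic function on a compact Riemannian manifold is constant, so $b_1$ is constant and the Ricci-flatness claims follow from \eqref{Einst-mwp-a}--\eqref{Einst-mwp-b}. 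The remaining assertions in (i) for general $m$ are immediate from the integral identity above combined with the already established equivalence $\lambda = 0 \Leftrightarrow \mu_i = 0 \; \forall i$.

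\textbf{Main obstacle.} The substantive issue is that \eqref{Einst-mwp-c} couples all the warping functions through the cross terms $g_B(\grad_B b_i, \grad_B b_k)$, which a priori blocks any fiber-by-fiber reasoning. The rescue, and the only real insight needed, is that every such cross term automatically vanishes at a critical point of $b_i$ alone, independently of the behaviour of the remaining $b_k$'s at that point; this is precisely what permits the Hopf-type maximum/minimum argument to decouple the system one $b_i$ at a time and reduces the entire lemma to elementary pointwise inequalities plus a single integration by parts.
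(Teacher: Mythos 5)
Your proposal is correct and follows essentially the same route as the paper: the two-sided bound $\lambda(\min_B b_i)^2\le\mu_i\le\lambda(\max_B b_i)^2$ obtained by evaluating \eqref{Einst-mwp-c} at the extrema of each $b_i$ (where the cross terms and $\|\grad_B b_i\|_B^2$ vanish and the sign of $\Delta_B b_i$ is controlled), plus the integrated $g_B$-trace of \eqref{Einst-mwp-a} combined with Remark \ref{divergence-thm} for \eqref{eq: lambda estimate mwp}. The only cosmetic difference is that the paper first rewrites \eqref{Einst-mwp-c} in terms of $\Delta_B b_i^{s_i}$ (equation \eqref{eq:lambda-mu mwp}) before applying the maximum principle and invokes the Bochner lemma for the $m=1$ sub-case of (i), which is exactly your harmonicity argument for $b_1^{s_1}$.
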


\begin{proof} Note that for each $i$, \eqref{Einst-mwp-c} (i.e. \eqref{eq:Einst-mwp-c-2}) is equivalent to
\begin{equation*}
  0=\sum_{\begin{subarray}{l}k=1\\k\neq i\end{subarray}}^m
			s_kg_B\left( \frac{\grad_B b_i}{b_i}, \frac{\grad_B b_k}{b_k}  \right)
            +\lambda
            - \frac{\mu_i}{ b_i^2}
            +  \frac{\Delta_{B}b_i}{b_i} + (s_i - 1)\frac{\|\grad_{B}b_i\|_B^2}{b_i^2 },
\end{equation*}
or moreover to
\begin{equation*}
0 = \sum_{\begin{subarray}{l}k=1\\k\neq i\end{subarray}}^m
			s_kg_B\left( \frac{\grad_B b_i}{b_i}, \frac{\grad_B b_k}{b_k}  \right) + \lambda - \mu_i \dfrac{1}{b_i^2}\ + \frac{1}{s_i}\dfrac{\Delta_B b_i^{s_i}}{b_i^{s_i}}
\end{equation*}
or
\begin{equation}\label{eq:lambda-mu mwp}
\lambda b_i^{s_i} = -b_i^{s_i} \sum_{\begin{subarray}{l}k=1\\k\neq i\end{subarray}}^m
			s_kg_B\left( \frac{\grad_B b_i}{b_i}, \frac{\grad_B b_k}{b_k}  \right)
             + \mu_i b_i^{s_i -2} - \frac{1}{s_i}\Delta_B b_i^{s_i}.
\end{equation}

\noindent Since $B$ is Riemannian, applying that the extremes are critical points and the maximum principle to \eqref{eq:lambda-mu mwp}, there results that if
\begin{equation}\label{eq:absolute extrems mwp}
0 < \min_B b_i = b_i({\widecheck{p}_i}) \le \max_B b_i = b_i(\widehat{p}_i),
\end{equation}
then
\begin{equation}\label{eq: max_principle-i mwp}
  \lambda b_i({\widecheck{p}_i})^2 \le \mu_i \le \lambda b_i({\widehat{p}_i})^2.
\end{equation}

\noindent Thus, the sign of $\lambda$ and $\mu_i$ are the same, and as a consequence, we get
\begin{description}
        \item[$\lambda = 0$ if and only if $\forall i: \mu_i=0$] Unlike the case $m=1$, \eqref{eq:lambda-mu mwp} does not imply that $b_i$ is constant. Indeed, the sign of $\displaystyle b_i^{s_i} \sum_{\begin{subarray}{l}k=1\\k\neq i\end{subarray}}^m
			s_kg_B\left( \frac{\grad_B b_i}{b_i}, \frac{\grad_B b_k}{b_k}  \right)$ is not well defined, and as a consequence neither the maximum principle nor the Bochner lemma may be applied.
        \item[$\lambda <0 $ if and only if $\forall i: \mu_i <0$] This comes from \eqref{eq: max_principle-i mwp} and as a consequence, it follows that $b_i({\widecheck{p}_i}) \ge b_i({\widehat{p}_i})$ and $b_i$ is constant for all $i$. Thus and \eqref{Einst-mwp-a}, $B$ results Einstein with $\tau_B = \lambda n <0$ and by \eqref{Einst-mwp-c} is $b_i^2=\dfrac{\mu_i}{\lambda} $ for each $i$.
        \item[$\lambda >0 $ if and only if $\forall i: \mu_i >0$] This comes from  \eqref{eq: max_principle-i mwp} and as a consequence, we get $\forall i: 0< b_i({\widecheck{p}_i})^2 \le \dfrac{\mu_i}{\lambda} \le b_i({\widehat{p}_i})^2.$
      \end{description}

In order to prove \eqref{eq: lambda estimate mwp}, we take the $g_B$-trace of \eqref{Einst-mwp-a}, integrate on $B$ and apply Remark \ref{divergence-thm}, so
\begin{equation}\label{eq:tau_B and lambda}
  \int_B \tau_{B} \,dv_{g_B}=\lambda n |B| + \sum_{i=1}^m
			s_i \int_B \frac{\Delta_B b_i}{b_i} \,dv_{g_B}= \lambda n |B| + \sum_{i=1}^m
			s_i \int_B \frac{\|\grad_B b_i\|_B^2}{b_i^2} \,dv_{g_B}.
\end{equation}
Note that the last term in the latter equation is $\geq 0$.

In the item i, as consequence of \eqref{eq:tau_B and lambda}, the total scalar curvature of $B$ results non negative. 
Moreover, since $\lambda$ is zero, 
the total scalar curvature of $B$ is zero if and only if all the $b_i$s are constant and $B\times_{\vect{b}}\vect{F}$ is trivial.

While the second part of the same is consequence of the Bochner lemma applied to \eqref{eq:lambda-mu mwp} and then of \eqref{Einst-mwp-a} and \eqref{Einst-mwp-c}.

\end{proof}

\bigskip

Now we prove Theorem \ref{Thm: PalKu}.
\begin{proof}[Proof of Theorem \ref{Thm: PalKu}]
	By taking the trace of both sides of (\ref{Einst-mwp-a}), we have	
	$$
	\tau_B=n \lambda + \sum_{k=1}^m\frac{s_k}{b_k} \Delta_B b_k,
	$$
	where $\tau_B$ denotes scalar curvature of $B$. Due to the second Bianchi identity, we know that
	$$
	d \tau_B=2 \diver_B(\Ric_B).
	$$
	Then, for all $ X \in \mathfrak X(B)$, we get
	\begin{equation}\label{divRic1}
	\diver_B \Ric_B(X)= \dfrac{1}{2} d \tau_B = \sum_{k=1}^m\frac{s_k}{2b_k^2} \left(  b_k d(\Delta_B b_k) -(\Delta_B b_k) d b_k\right)(X).
\end{equation}

	\noindent On the other hand, if $E_{1}, E_{2}, \cdots, E_{n}$ is a local orthonormal frame of $B$, then for any $k=1, \dots, m$ we have
	\begin{equation}\label{diver-frac}
	\diver_B\left(\frac{1}{b_k} H_B^{b_k}\right)(X)=\sum_{j=1}^n\left(\!D_{E_{j}}\!\left(\frac{1}{b_k} H_B^{b_k}\!\right)\!\right)\left(E_{j}, X\right)=-\frac{1}{b_k^{2}} H_B^{b_k}(\grad_B  b_k, X)+\frac{1}{b_k} \diver_B H_B^{b_k}(X),
	\end{equation}
	where, by definition,
	$$
	H_B^{b_k}(X, \grad_B  b_k)=\left(D_{X} d b_k\right)(\grad_B  b_k)=\frac{1}{2} d\left(\| \grad_B  b_k \|_B^{2}\right)(X).
	$$
	Then, equation \eqref{diver-frac} becomes
	$$
	\diver_B\left(\frac{1}{b_k} H_B^{b_k}\right)(X)=-\frac{1}{2 b_k^{2}} d\left(\|\grad_B  b_k \|_B^{2}\right)(X)+\frac{1}{b_k} \diver_B H_B^{b_k}(X).
	$$
	Hence, from this equation and (\ref{Einst-mwp-a}) it follows that
	\begin{equation}\label{diverRic}
	\diver_B\left(\Ric_{B}\right)=\sum_{k=1}^ms_k\diver_B\left(\frac{H_{B}^{b_k}}{b_k}\right)
	=\sum_{k=1}^m\frac{s_k}{2 b_k^{2}}\left\{- d\left(\|\grad_B  b_k \|_B^{2}\right)
	+2{b_k} \diver_B H_B^{b_k}\right\}.
	\end{equation}
	Applying the formula for the divergence of the Hessian tensor  (see for example \cite{Kim-Kim2003,Pal-Kumar2019}), \eqref{Einst-mwp-a} and the definition of Hessian, we get
	\begin{align}
\diver_B\left(H_B^{b_k}\right)(X) &=\Ric_B(\grad_B  b_k, X)  -\Delta_B(d b_k)(X)\nonumber\\
                                &=\lambda d b_k(X) + \sum_{j=1}^m\frac{s_j}{b_j}H_{B}^{b_j}(\grad_B  b_k, X)-\Delta_B(d b_k)(X).\nonumber
\end{align}
It follows from \eqref{diverRic} that


  %
\begin{align}\label{diverRic2}
    \diver_B\left(\Ric_{B}\right)&=\sum_{k=1}^m\frac{s_k}{2 b_k^{2}}\Bigg\{
    - d\left(\|\grad_B  b_k \|_B^{2}\right)(X)\\
	&\left.+2\lambda{b_k} d b_k(X) +2b_k \sum_{j=1}^m \frac{s_j}{b_j}H_{B}^{b_j}(\grad_B b_k, X) + 2{b_k}d\Delta_B( b_k)(X)\right\}.\nonumber
\end{align}
Note that
\begin{equation*}
  	d\left(g_{B}(\grad_{B}b_k,\grad_{B}b_j)\right) (X)
	= H_B^{b_k}\left( \grad_{B}b_j, X\right) + H_B^{b_j}\left( \grad_{B}b_k,X\right),
\end{equation*}
and then
\begin{align*}
  	\sum_{k,j=1}^m\frac{s_k s_j}{b_k b_j}d\left(g_{B}(\grad_{B}b_k,\grad_{B}b_j)\right) (X)
	&= \sum_{k,j=1}^m\frac{s_k s_j}{b_kb_j}\left(
 H_B^{b_k}\left( \grad_{B}b_j, X\right) + H_B^{b_j}\left( \grad_{B}b_k,X\right) \right) \\
 &= 2\sum_{k,j=1}^m\frac{s_k s_j}{b_kb_j} H_B^{b_j}\left( \grad_{B}b_k,X\right).
\end{align*}
From \eqref{diverRic2} we obtain
\begin{align*}
    &\diver_B\left(\Ric_{B}\right)=\\
&=\sum_{k=1}^m\frac{s_k}{2 b_k^{2}}\left\{- d\left(\|\grad_B  b_k\|_B^{2}\right) +2 \lambda {b_k} db_k  + 2b_kd\Delta_B b_k\right\}
 +\sum_{j,k} \frac{s_k s_j}{2b_k b_j}d\left(g_{B}(\grad_{B}b_k,\grad_{B}b_j)\right).
\end{align*}
Equaling the last expression to \eqref{divRic1}, we have
$$
\sum_{k=1}^m\frac{s_k}{2 b_k^{2}}\left\{\sum_{j=1}^m s_j\frac{b_k}{b_j}d\left(g_{B}(\grad_{B}b_k,\grad_{B}b_j)\right) (X) \right\}
$$
$$
=\sum_{k=1}^m\frac{s_k}{2 b_k^{2}}\left\{-\Delta_B b_k d b_k + b_k d(\Delta_B b_k)+ d\left(\|\grad_B  b_k \|_B^{2}\right)
-2\lambda{b_k} d b_k(X) - 2{b_k}d\left(\Delta_B b_k\right)\right\}(X),
$$
and then
\begin{equation}\label{suma0}
0=\sum_{k=1}^m\frac{s_k}{2 b_k^{2}}\Big\{\sum_{j=1}^ms_j\frac{b_k}{b_j}d\left(g_{B}(\grad_{B}b_k,\grad_{B}b_j)\right) (X)
-d\left( -b_k\Delta_B b_k  + \|\grad_B  b_k \|_B^{2}
-\lambda b_k^2\right)(X)\Big\},
\end{equation}
Note that
\[
d \Bigg(\frac{b_k}{b_j}g_B\left( \grad_B b_k, \grad_B b_j  \right)\Bigg)
  = \frac{b_j db_k -b_k d b_j}{b_j^2} g_B\left( \grad_B b_k, \grad_B b_j  \right) + \frac{b_k}{b_j}d(g_B( \grad_B b_k, \grad_B b_j)).
\]

So,
\begin{align*}
&\sum_{k=1}^m\frac{s_k}{2 b_k^{2}}\sum_{j=1}^ms_j\frac{b_k}{b_j}d\left(g_{B}(\grad_{B}b_k,\grad_{B}b_j)\right)\\
&=\sum_{k=1}^m\frac{s_k}{2 b_k^{2}}\sum_{j=1}^ms_j  d \Bigg(\frac{b_k}{b_j}g_B\left( \grad_B b_k, \grad_B b_j  \right)\Bigg)
-\sum_{k=1}^m\frac{s_k}{2 b_k^{2}}\sum_{j=1}^ms_j\frac{b_j db_k -b_k d b_j}{b_j^2} g_B\left( \grad_B b_k, \grad_B b_j  \right)\\
&=\sum_{k=1}^m\frac{s_k}{2 b_k^{2}}d \Bigg(\sum_{j=1}^ms_j \frac{b_k}{b_j}g_B\left( \grad_B b_k, \grad_B b_j  \right)\Bigg)
-\dfrac{1}{2}\sum_{k=1}^m\sum_{j=1}^m \frac{s_k s_j}{ b_k^{2}b_j^{2}} (b_j db_k -b_k d b_j) g_B( \grad_B b_k, \grad_B b_j),
\end{align*}
but by simmetry, the last term in the right hand side of the latter is $0$.
%
%
Thus, \eqref{suma0} becomes
\begin{align}\label{eq:thm1.2}
0=\sum_{k=1}^m\frac{s_k}{2 b_k^{2}}d\Big\{\sum_{j=1}^ms_j\frac{b_k}{b_j}g_{B}(\grad_{B}b_k,\grad_{B}b_j)
+ b_k\Delta_B b_k - \|\grad_B  b_k \|_B^{2}
+ \lambda b_k^2\Big\} \left(X\right)
\end{align}
for all $X\in \mathfrak X(B)$, which concludes the proof.
\end{proof}

Moreover, we can see that even assuming \eqref{Einst-mwp-a} and \eqref{Einst-mwp-b}, we cannot get \eqref{Einst-mwp-c}.

\begin{rem}\label{rem:Pal-Kumar-results}
We observe that \eqref{eq:thm1.2} is equivalent to
\begin{align}\label{eq:thm1-2-bis}
0=\sum_{k=1}^m\frac{s_k}{2 b_k^{2}}
d\left\{
b_k^2 \left[\sum_{j=1}^m s_j g_{B}\left(\frac{\grad_{B}b_k}{b_k},\frac{\grad_{B}b_j}{b_j}\right)
+ \frac{\Delta_B b_k}{b_k} - \frac{\|\grad_B  b_k\|_B^{2}}{b_k^2}
+ \lambda
\right]
\right\} (X),
\end{align}
for all $X \in \mathfrak X(B)$.

Only in the case $m=1$ it is equivalent to
\begin{align}\label{rem:Pal-Kumar-results-m=1}
0=
d\left\{
b_1^2 \left[
(s_1 - 1)\dfrac{\|\grad_B  b_1\|_B^{2}}{b_1^2} + \dfrac{\Delta_B b_1}{b_1}
+ \lambda
\right]
\right\}.
\end{align}

The previous argument cannot be applied if the number of fibers is at least $2$. Indeed, if this were the case, we would not be able to deduce from \eqref{eq:thm1-2-bis} that each summand was $0$, i.e. that
for every $k$
\begin{align*}
d\left\{
b_k^2 \left[\sum_{j=1}^m s_j g_{B}\left(\frac{\grad_{B}b_k}{b_k},\frac{\grad_{B}b_j}{b_j}\right)
+ \frac{\Delta_B b_k}{b_k} - \frac{\|\grad_B  b_k\|_B ^{2}}{b_k^2}
+ \lambda
\right]
\right\}
\end{align*}
would be $0$.

Precisely, in our opinion, this is an incorrect conclusion in the work of Pal and Kumar \cite[Proposition 4]{Pal-Kumar2019}. Moreover, in that proposition, the authors require the unnecessary hypothesis \cite[(2) in Proposition 4]{Pal-Kumar2019}, as we show in the proof of our Theorem \ref{Thm: PalKu}.
\end{rem}


\bigskip

\section{Einstein {\it gK} manifolds with compact Riemannian base}\label{sec:non-exist}

Throughout this section, $(B,g_B)$ is a compact Riemannian manifold of dimension $n$. For the sake of clarity, we begin this section by writing the Corollary \ref{cor: Einstein_m} for a \gK .


\begin{cor}\label{cor: Einstein gK_conditions}
	Let the \emph{\gK}  $ B \times_{{\vect{\varphi}}^{\vect{p}}} \vect{F}$ be. $ B \times_{{\vect{\varphi}}^{\vect{p}}} \vect{F}$ is Einstein with $\Ric =\lambda g$
 if and only if the next statements are all verified
\begin{subequations}\label{eq:Einstein conditions gK}
	\begin{align}
	&\lambda g_B=\Ric_{B}-\mathcal{H}_{B}^\varphi=\Ric_{B}-(\eta-\zeta) \frac{1}{\varphi^{2}} d \varphi \otimes d \varphi -\zeta \frac{1}{\varphi} H_{B}^{\varphi} , \label{eq:Einstein conditions gK a}\\
	&\hbox{In particular, if } \zeta, \eta \neq 0,  \hbox{ then }
\label{eq:Einstein tensor gK_mwp conditions gK b}\tag{\eqref{eq:Einstein conditions gK a}$(\zeta,\eta \neq 0)$}  \\
	& \lambda g_B=\Ric_{B}-\beta \frac{ {H}_B^{\varphi^{1/\alpha}}}{\varphi^{1/\alpha}}, \nonumber \\
	&\forall i \in \{1,\dots,m\}, \left(F_i, g_{F_i}\right) \text{ is Einstein with } \Ric_{F_i}=\mu_i g_{F_i}, \label{eq:Einstein conditions gK c}\\
    &\forall i \in \{1,\dots,m\}, \lambda \varphi^{2p_i}=\mu_i- p_i\left(\frac{\Delta_B\varphi}{\varphi} +  (\zeta-1)
    \frac{\| \grad_{B}\varphi\|_B^2}{\varphi^{2}}\right)\varphi^{2p_i}
	 , \label{eq:Einstein conditions gK e}\\
    &\hbox{In particular, if } \zeta \neq 0,  \hbox{ then }
    \label{eq:Einstein conditions gK f}\tag{\eqref{eq:Einstein conditions gK e}$(\zeta \neq 0)$}\\
    &\forall i \in \{1,\dots,m\}, \lambda \varphi^{2p_i} =\mu_i - \frac{p_i}{\zeta} \frac{\Delta_{B} \varphi^\zeta}{\varphi^\zeta} \varphi^{2p_i}
	,\nonumber
	\end{align}
\end{subequations}
	where $\Ric$ (respectively, $\Ric_B$ or $\Ric_{F_i}$) is the Ricci curvature tensor of $B \times_{{\vect{\varphi}}^{\vect{p}}} \vect{F}$ (respectively, $(B,g_B)$ or $(F_i,g_{F_i})$), $\alpha=\dfrac{\zeta}{\eta}$ and $\beta=\dfrac{\zeta^{2}}{\eta}$, and $\mathcal{H}_{B}$ is the differential operator on $C_{>0}^{\infty}(B)$ defined by
\begin{equation*}
	\mathcal{H}_{B}^\phi:=\sum_{i=1}^{m} s_{i} \frac{H_{B}^{\phi^{p_{i}}}}{\phi^{p_{i}}}. 
\end{equation*}
\end{cor}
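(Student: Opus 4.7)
The plan is to read the Einstein condition $\Ric = \lambda g$ off Corollary \ref{cor: Ricci gK_mwp} block by block, using the orthogonal decomposition of $T(B\times_{\vect{\varphi}^{\vect{p}}}\vect{F})$ induced by the product structure. Since both $\Ric$ and $g$ are block-diagonal with respect to the splitting $\mathfrak L(B)\oplus\bigoplus_i\mathfrak L(F_i)$, the single tensor identity $\Ric=\lambda g$ is equivalent to its restrictions to three types of pairs: pairs $X,Y\in\mathfrak L(B)$; mixed pairs $X,V$ or pairs $V,W$ lying in distinct $\mathfrak L(F_i),\mathfrak L(F_j)$; and pairs $V,W\in\mathfrak L(F_i)$. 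This is exactly the specialization to $b_i=\varphi^{p_i}$ of Corollary \ref{cor: Einstein_m}, but it is cleaner to derive it directly from the $\gK$-adapted Ricci formulas.

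For $X,Y\in\mathfrak L(B)$ the metric restricts to $g_B$, so \eqref{eq:Ricci tensor gK_mwp a} turns $\Ric(X,Y)=\lambda g_B(X,Y)$ into exactly \eqref{eq:Einstein conditions gK a}. For mixed pairs and pairs from distinct fibers, both $\Ric$ and $g$ vanish by \eqref{eq:Ricci tensor gK_mwp c} and \eqref{eq:Ricci tensor gK_mwp d}, so no condition is imposed. The essential block is $V,W\in\mathfrak L(F_i)$, where $g(V,W)=\varphi^{2p_i}g_{F_i}(V,W)$; combining this with \eqref{eq:Ricci tensor gK_mwp e}, the condition $\Ric(V,W)=\lambda g(V,W)$ becomes
\begin{equation*}
\Ric_{F_i}(V,W) = \left[\lambda + p_i\!\left(\frac{\Delta_B \varphi}{\varphi} + (\zeta-1)\frac{\|\grad_B \varphi\|_B^2}{\varphi^2}\right)\right]\varphi^{2 p_i}\, g_{F_i}(V,W).
\end{equation*}
The left-hand side depends only on points of $F_i$ while the bracketed factor on the right depends only on points of $B$; evaluating at pairs $V,W$ where $g_{F_i}(V,W)\neq 0$ and letting the $B$-variable vary forces the bracket to be a constant $\mu_i\in\mathbb R$ and $\Ric_{F_i}=\mu_i g_{F_i}$. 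This yields \eqref{eq:Einstein conditions gK c} and \eqref{eq:Einstein conditions gK e} simultaneously. The converse is obtained simply by reading the same chain of equivalences backwards.

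The alternative forms \eqref{eq:Einstein tensor gK_mwp conditions gK b} and \eqref{eq:Einstein conditions gK f} are purely algebraic consequences of identities already recorded in the paper: applying \eqref{eq:Hessian2} with $q=1/\alpha=\eta/\zeta$ collapses $(\eta-\zeta)\varphi^{-2}d\varphi\otimes d\varphi+\zeta\varphi^{-1}H_B^{\varphi}$ into $\beta\,\varphi^{-1/\alpha}H_B^{\varphi^{1/\alpha}}$, while \eqref{eq:Lap2} with $q=\zeta$ rewrites the bracket in \eqref{eq:Einstein conditions gK e} as $\zeta^{-1}\Delta_B\varphi^{\zeta}/\varphi^{\zeta}$. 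The only mildly nontrivial step in the whole argument is the separation-of-variables step in the fiber block; everything else reduces to direct substitution into the Ricci formulas of Corollary \ref{cor: Ricci gK_mwp}.
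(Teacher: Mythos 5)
Your proposal is correct and follows essentially the same route as the paper: the corollary is obtained by reading the Einstein condition $\Ric=\lambda g$ blockwise from the $\gK$ Ricci formulas of Corollary \ref{cor: Ricci gK_mwp} (equivalently, by substituting $b_i=\varphi^{p_i}$ into Corollary \ref{cor: Einstein_m}), with the separation-of-variables step producing the constants $\mu_i$ and the identities \eqref{eq:Hessian2} and \eqref{eq:Lap2} yielding the alternative forms for $\zeta,\eta\neq 0$. The verification of the coefficients $\beta q=\zeta$ and $\beta q(q-1)=\eta-\zeta$ with $q=1/\alpha$ is exactly what the paper relies on.
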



\begin{rem}\label{rem:trivial gK}
Note that by Corollary \ref{cor: Einstein gK_conditions},
an Einstein $\gK$ is trivial (i.e. all the warping functions are constant) if and only if:
\begin{equation*}
  \lambda g_B = \Ric_B, \textrm{ so that $B$ is Einstein with }\lambda n=\tau_B
\end{equation*}
and for each $i=1, 2, \dots, m$,
\begin{equation*}
  \lambda \varphi_0^{2p_i} g_{F_i} = \mu_i g_{F_i}, \textrm{ so that } \lambda \varphi_0^{2p_i} = \mu_i,
\end{equation*}
where $\varphi_0$ is a positive constant.
If $\lambda = 0$, then $B$ and all fibers must be Ricci flat. However, $\lambda <0$ if and only if all the $\mu_i < 0$, and in such a case
$\displaystyle \varphi_0 = \left(\frac{\mu_i}{\lambda}\right)^{\frac{1}{2p_i} }$ with $\displaystyle \lambda = \frac{1}{|B|n}\int_B \tau_B \,dv_{g_B}$ and the scalar curvature of $ B \times_{{\vect{\varphi_0}}^{\vect{p}}} \vect{F}$ is $\tau = \lambda \left( n + \sum_{i=1}^m s_i \right)$, where $\displaystyle |B|=\int_B dv_{g_B}$. Analogously for $\lambda >0$.

This Remark, give rise to a strong condition for the $p_i$s in order to obtain a constant warping function that make our \gK a trivial Einstein manifold with $\lambda \neq 0$, namely
\begin{equation*}
  \varphi_0^{2p_i} = \dfrac{\mu_i}{\lambda}, \textrm{ for } i=1, \dots, m.
\end{equation*}

\end{rem}

\bigskip


\begin{rem}\label{rem:gK non positive lambda}
Let $(M,g)$ be the \emph{\gK}  $ B \times_{{\vect{\varphi}}^{\vect{p}}} \vect{F}$  Einstein manifold with $B$ compact, $\Ric = \lambda g$ and $\Ric_{F_i}=\mu_i g_{F_i}$ for $i=1,\dots,m$,  where $\lambda, \mu_i \in \mathbb{R}$.
Then, by \eqref{eq:Einstein conditions gK e}
in Corollary \ref{cor: Einstein gK_conditions},
for each $i= 1, \dots, m$
\begin{equation}\label{eq:Einstein gK_mwp e}
  \lambda \varphi^{2p_i} - \mu_i =
			- p_i\left(\frac{\Delta_B\varphi}{\varphi} +  (\zeta-1)
            \frac{\| \grad_{B}\varphi\|_B^2}{\varphi^{2}}\right)\varphi^{2p_i}.
\end{equation}
Since $B$ is compact, there exist $\widecheck P$ and $\widehat P$ in $B$ such that,
\begin{equation*}
  \varphi(\widecheck P)=\min_B \varphi \textrm{ and } \varphi(\widehat P)=\max_B \varphi.
\end{equation*}
Hence, for each fixed $i$, applying the maximum principle and considering the several combinations of signs of the $p_i$'s and $\lambda$ in the latter equation, it results that $\lambda = 0$ if and only if $\mu_i=0$, and that $\lambda \leq 0$ is not admissible for $\varphi $ non-constant.

\noindent In particular, if $\lambda=\mu_i=0$ and $p_i\neq 0$, $\varphi$ constant is a necessary condition of equation \eqref{eq:Einstein gK_mwp e} and the Bochner lemma.

\noindent Taking in to account that a \gK is a type of \mwp, the precedent result is strongly close to Lemma \ref{lem: lambda estimates mwp}.

\noindent This remark leaves the interval $\displaystyle \left(0, \frac{1}{|B|n}\int_B \tau_B \,dv_{g_B}  \right)$ as the only admissible range of $\lambda$s for non-constant $\varphi$ and so indicates as a necessary condition that the total scalar curvature of $B$ is $>0$, see \eqref{eq: lambda estimate mwp}.

\bigskip

\end{rem}

\begin{proof}[Proof of Theorem \ref{Thm-productBase}]
\textcolor{black}{The item (1) is an inmediate consequence of the first line in Remark \ref{rem:trivial gK} and the fact that a standard product of manifolds is Einstein if and only if  any factor is Einstein with the same constant of proportionality of the whole product, see \cite[Proposition 1.99]{Besse2008} or Corollary
\ref{cor: Einstein gK_conditions}
 with $\varphi = 1$.
Indeed, if $N_1 \times N_2$ is the basis of a trivial \gK, Remark \ref{rem:trivial gK} implies that the standard product $N_1 \times N_2$ is Einstein, so $N_1$ and $N_2$ also, and all of them with the same constant $\lambda$. But the hipothesis of Theorem \ref{Thm-productBase} imply that $\lambda r_1 = \tau_{N_1} >0$ and $\lambda r_2 = \tau_{N_2} $ with $|N_2|\tau_{N_2} = \displaystyle \int_{N_2} \tau_{N_2} \leq 0 $. Hence, we obtain the contradiction $0 < \lambda \leq 0$.}

\textcolor{black}{In order to prove item (2), the Remark \ref{rem:gK non positive lambda}} allows us to reduce our proof to the case
$\lambda \in \displaystyle \left(0, \frac{1}{|B|n}\int_B \tau_B \,dv_{g_B}\right)$, where $B=N_1 \times N_2$ and $\varphi:N_1 \times N_2 \rightarrow (0,+\infty)$ is non constant.
\begin{description}
  \item[$\zeta \neq 0$ (so $\eta >0$)]
By Corollaries \ref{cor: Ricci gK_mwp}
and \ref{cor: Einstein gK_conditions}
, we have
\begin{equation}\label{trazas}
 \Ric_{N_1}+\Ric_{N_2}=\Ric_{B}= \lambda g_B + \beta  \frac{ {H}_B^{\varphi^{1/\alpha}}}{\varphi^{1/\alpha}} \quad \text{ and } \quad
 \lambda \varphi^{2 p_i}=\mu_i-\frac{p_i}{\zeta} \frac{\Delta_B\varphi^{\zeta}}{\varphi^\zeta} \varphi^{2 p_i}.
\end{equation}
Now, by taking the $g_{N_1}$ and $g_{N_2}$ traces in the first equation respectively, we obtain the following:
\[
 \tau_{N_1}=\lambda r_1+\beta \frac{\Delta_{N_1} \varphi^{\frac{1}{\alpha}}}{\varphi^{\frac{1}{\alpha}}} \quad \text{and}\quad \tau_{N_2}=\lambda r_2+\beta \frac{\Delta_{N_2} \varphi^{\frac{1}{\alpha}}}{\varphi^{\frac{1}{\alpha}}}.
\]

Since by hypothesis $\tau_{N_1}$ and $\lambda$ are constant and $N_1$ is compact, then $\varphi^{\frac{1}{\alpha}}$ does not depend on $N_1$ because of Bochner's lemma (see for example \cite[Proposition 1.2, p.39]{Yano1970}). Then, since $N_2$ is compact also,  if $\lambda >0$, since $\beta >0$, by Remark \ref{divergence-thm}
\[
    \int_{N_2} \tau_{N_2} \,dv_{g_{N_2}}= \lambda r_2  |N_2| + \beta \int_{N_2}\frac{\Delta_{N_2} \varphi^{\frac{1}{\alpha}}}{\varphi^{\frac{1}{\alpha}}}\,dv_{g_{N_2}}=
    \lambda r_2  |N_2| + \beta \int_{N_2}\left\|\frac{\grad_{N_2} \varphi^{\frac{1}{\alpha}} }{\varphi^{\frac{1}{\alpha}}}\right\|_{N_2}^2 \,dv_{g_{N_2}} >0,
\]
where $|N_2|$ is the volume of $N_2$. Then,
we get a contradiction with the hypothesis of non-positive total scalar curvature of $N_2$.

  \item[$\zeta = 0$ and $\eta > 0$]

By
Corollaries \ref{cor: Ricci gK_mwp}
and \ref{cor: Einstein gK_conditions}
, we have
\begin{equation}\label{eq:trazas2-a}
 \Ric_{N_1}+\Ric_{N_2}=\Ric_{B}= \lambda g_B + \eta  \frac{1}{\varphi^{2}} d\varphi
\otimes d\varphi
\end{equation}
and
\begin{equation}\label{eq:trazas2-b}
 \lambda \varphi^{2 p_i}=\mu_i-{p_i} \varphi^{2 p_i} \left(\frac{\Delta_B\varphi}{\varphi}-\frac{\|\grad_B\varphi\|_B^2}{\varphi^2}\right)
\end{equation}
Now, by taking the $g_{N_1}$ and $g_{N_2}$ traces in \eqref{eq:trazas2-a} respectively, there results
\[
 \tau_{N_1}=\lambda r_1+\eta \frac{\|\grad_{N_1} \varphi\|_{N_1}^2}{\varphi^{2}} \textrm{ and }
 \tau_{N_2}=\lambda r_2+\eta \frac{\|\grad_{N_2} \varphi\|_{N_2}^2}{\varphi^{2}}.
\]
Then, since $N_2$ is compact and $\eta >0$, we have
\[
    \int_{N_2} \tau_{N_2} \,dv_{g_{N_2}} =
    \lambda r_2  |N_2| + \eta \int_{N_2}\left\|\frac{\grad_{N_2} \varphi }{\varphi}\right\|_{N_2}^2  \,dv_{g_{N_2}}>0,
\]
where $|N_2|$ is the volume of $N_2$. Then,
we get a contradiction with the hypothesis of non-positive total scalar curvature of $N_2$.
\end{description}
%
%
Hence, the \gK ~$(N_1 \times N_2) \times_{\vect{\varphi}^{\vect{p}}} \vect{F}$ cannot be an Einstein manifold.

\end{proof}

\noindent Note that instead of the second equation of \eqref{trazas}, in the precedent proof we apply the more general
\eqref{eq:Einstein conditions gK e},
precisely in the body of  Remark \ref{rem:gK non positive lambda}.

\bigskip

\begin{rem}\label{rem:positive-total-sc}$ ~ $
Here we observe that the total scalar curvature of the Riemannian compact base $B$ in Theorem \ref{Thm-productBase} can be take positive, indeed.

    \noindent Let $(N_1, g_{N_1})$ and $(N_2, g_{N_2})$ be compact Riemannian manifolds with dimensions $r_1$ and $r_2$ and scalar curvatures $\tau_{N_1}$ and $\tau_{N_2}$  respectively. Consider the product Riemannian manifold $B^n=N_1^{r_1} \times N_2^{r_2}$ of dimension $n=r_1+r_2,$ with metric $g_B=g_{N_1} \oplus g_{N_2}$ as before and denote by $\tau_B=\tau_{N_1}+\tau_{N_2} $ the scalar curvature of $B$.

\noindent Assuming that $\tau_{N_1}$ is constant, we have
$$
\begin{aligned}
\int_B \tau_B \,dv_{g_B}
&=|B| \tau_{N_1}+\int_B \tau_{N_2} \,dv_{g_B}
=\left|N_1\right|\left|N_2\right| \tau_{N_1}+\left|N_1\right| \int_{N_2} \tau_{N_2} \,dv_{g_{N_2}}\\
&=\left|N_1\right|\left(\left|N_2\right| \tau_{N_1}+ \int_{N_2} \tau_{N_2} \,dv_{g_{N_2}} \right)
 =\left|N_1\right|\left|N_2\right|\left(\tau_{N_1}+\frac{1}{\left|N_2\right|} \int_{N_2} \tau_{N_2} \,dv_{g_{N_2}}\right)\\
&=|B|\left(\tau_{N_1}+\frac{1}{\left|N_2\right|} \int_{N_2} \tau_{N_2}\,dv_{g_{N_2}}\right)
\end{aligned}
$$
where $|B|$ (respectively, $|N_1|,|N_2|$) denotes the volume of $B$ (respectively, $N_1,N_2$).
In addition, assuming that $\displaystyle \int_{N_2} \tau_{N_2} \leqslant 0$ as in the previous theorem and that $\tau_{N_1}$ is positive and large enough, then the total scalar curvature of $B$ results positive, i.e. $\sigma_B:= \displaystyle \int_B \tau_B \, dv_{g_B}>0$.
\end{rem}
\bigskip

\begin{rem}\label{rem:Einstein gK equations} $ ~ $ Now, if $(B^n,g_B)$ is a compact Riemannian manifold and $\varphi \in C^\infty_{>0}(B)$ verifies
\begin{equation}\label{eq:Einstein-gK}
  \Ric_{B}= \lambda g_B + \beta  \frac{ {H}_B^{\varphi^{1/\alpha}}}{\varphi^{1/\alpha}},
\end{equation}
where $\lambda, \beta, \alpha$ are real numbers and $\beta$ is positive,
 by taking the $g_B$-trace in the latter equation, we have
\begin{equation}\label{eq:S_B}
\tau_B=\lambda n+\beta \frac{\Delta_B \varphi^{\frac{1}{\alpha}}}{\varphi^{\frac{1}{\alpha}}},
\end{equation}
or equivalently
\begin{equation}\label{eq: eigenvalue potential S_B}
 -\beta \Delta_B \varphi^{\frac{1}{\alpha}}+\tau_B \varphi^{\frac{1}{\alpha}}=\lambda n \varphi^{\frac{1}{\alpha}},
\end{equation}
i.e.  $\lambda n$ is the main eigenvalue of the operator $-\beta \Delta_B+\tau_B$ and $\varphi^{\frac{1}{\alpha}}$ the corresponding positive eigenfunction.
Then, by the variational structure of this eigenvalue, namely:
\begin{equation}\label{eq:eigenvalue gK}
  \lambda n = \inf \left\{ \int_B (\beta \|\grad_B \psi \|^2 + \tau_B \psi^2) \, dv_{g_B}; \psi \in H^1(B), \int_B \psi^2 \, dv_{g_B}=1  \right \},
\end{equation}
where $H^1(B)$ is the Sobolev space of order $1$,
there results $\lambda n \leq \displaystyle \dfrac{1}{|B|}\int_B \tau_B \, dv_{g_B}$. Thus, if the total curvature of $B$ is non positive, then $\lambda n \leqslant 0$ (i.e. $\lambda \leq 0$).


\noindent Furthermore, integrating \eqref{eq:S_B} over $B$, by Remark \ref{divergence-thm}, we get
$$
\int_B \tau_B \, dv_{g_B}=\lambda n|B|+\beta \int_B\left\|\frac{\grad_B \varphi^{\frac{1}{\alpha}}}{\varphi^{\frac{1}{\alpha}}}\right\|^2 \, dv_{g_B}
$$
which implies that if $\varphi$ is non constant, then
$$
\int_B \tau_B \, dv_{g_B}-\lambda n|B|> 0.
$$

 On the other hand, if $(B^n,g_B)$ is a compact Riemannian manifold and $\varphi \in C^\infty_{>0}(B)$ verifies
\begin{equation}\label{eq:3.2a}
 \Ric_{B}= \lambda g_B + \eta  \frac{1}{\varphi^{2}} d\varphi
\otimes d\varphi,
\end{equation}
where $\eta$ is a positive constant.  By taking the $g_B$-trace in the latter equation, we have
\begin{equation*}
\tau_B=\lambda n+\eta \frac{\|\grad_B \varphi\|^2}{\varphi^{2}}
\textrm{ so } \int_B \tau_B \,dv_{g_B}-\lambda n|B| = \int_B \eta \frac{\|\grad_B \varphi\|^2}{\varphi^{2}} \,dv_{g_B} \geq 0,
%
\end{equation*}
and the inequality is strict if $\varphi$ is non constant.
\end{rem}

As consequence of the latter remark, we have the following generalization to \gK s of the results obtained in Section 4 for Einstein singly warped products with compact Riemannian base.

\begin{thm} \label{thm:range of lambda} Let $(B^n,g_B)$ be a compact Riemannian manifold. If  $B^n \times_{\vect{\varphi}^{\vect{p}}} \vect{F}$ is a $\gK$ Einstein manifold with $\Ric=\lambda g$ for some $\lambda\in \mathbb{R}$, with $\varphi$ non constant
, then
\begin{equation}
 0 < \lambda <\frac{1}{n|B|}\int_B \tau_B \, dv_{g_B}.
\end{equation}
Furthermore, if $\zeta \neq 0$, then $\lambda n$ is the main eigenvalue of the operator $-\beta \Delta_B+\tau_B$ and $\varphi^{\frac{1}{\alpha}}$ the corresponding positive eigenfunction, with $\alpha, \beta$ as in the Corollary \ref{cor: Einstein gK_conditions}.
%
%
\end{thm}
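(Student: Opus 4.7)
The plan is to combine the sign analysis carried out in Remark \ref{rem:gK non positive lambda} with the trace identities derived in Remark \ref{rem:Einstein gK equations}. I would split the argument according to whether $\zeta\neq 0$ or $\zeta=0$; in the latter situation necessarily $\eta>0$, since $\eta=0$ forces every $p_l=0$ and the whole warping collapses. Since the positivity of $\lambda$ has already been isolated in Remark \ref{rem:gK non positive lambda}, the strategy is to use it directly and then focus on the upper bound together with the eigenvalue interpretation.

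First I would establish $\lambda>0$. Evaluating \eqref{eq:Einstein conditions gK e} at the points $\widecheck{P}$ and $\widehat{P}$ of the compact $B$ where $\varphi$ attains its minimum and maximum, the first-order terms vanish and the sign of $\Delta_B\varphi$ is fixed by the maximum principle. The case-by-case inspection of the possible signs of $p_i$ and $\lambda$ described in Remark \ref{rem:gK non positive lambda} then rules out $\lambda<0$; the borderline $\lambda=0$ forces every $\mu_i=0$, and Bochner's lemma applied to the resulting equation forces $\varphi$ constant, contradicting the hypothesis.

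Next, for the upper bound and the eigenvalue clause, I would take the $g_B$-trace of the base equation in Corollary \ref{cor: Einstein gK_conditions}. In the case $\zeta\neq 0$, using \eqref{eq:Einstein conditions gK a} with $\beta=\zeta^{2}/\eta>0$ gives
\[
\tau_B=\lambda n+\beta\,\frac{\Delta_B\varphi^{1/\alpha}}{\varphi^{1/\alpha}},
\]
equivalently $(-\beta\Delta_B+\tau_B)\varphi^{1/\alpha}=\lambda n\,\varphi^{1/\alpha}$. The positivity and smoothness of $\varphi^{1/\alpha}$ on the compact Riemannian $B$ identify $\lambda n$ as the principal eigenvalue of the self-adjoint operator $-\beta\Delta_B+\tau_B$, with $\varphi^{1/\alpha}$ as the associated positive eigenfunction; this is precisely the \emph{furthermore} part. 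Integrating the same trace identity over $B$ and invoking Remark \ref{divergence-thm} yields
\[
\int_B\tau_B\,dv_{g_B}-\lambda n|B|=\beta\int_B\Bigl\|\tfrac{\grad_B\varphi^{1/\alpha}}{\varphi^{1/\alpha}}\Bigr\|_B^{2}\,dv_{g_B}>0,
\]
with strictness from $\beta>0$ and $\varphi$ non-constant. The residual case $\zeta=0$, $\eta>0$ uses \eqref{eq:3.2a} instead; a parallel trace-and-integrate gives $\int_B\tau_B\,dv_{g_B}-\lambda n|B|=\eta\int_B\|\grad_B\varphi\|_B^{2}/\varphi^{2}\,dv_{g_B}>0$. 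Combining both cases with the positivity $\lambda>0$ furnishes the two-sided bound.

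The genuinely delicate step is the strict positivity of $\lambda$: the upper bound is a routine divergence-theorem calculation, but excluding $\lambda\leq 0$ requires the full maximum-principle and Bochner input of Remark \ref{rem:gK non positive lambda}, which is what makes the hypothesis ``$\varphi$ non-constant'' indispensable. I would therefore dispose of that step first and then package the trace, integration, and variational characterization of the principal eigenvalue as the remaining, essentially mechanical, ingredients.
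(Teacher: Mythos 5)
Your proposal is correct and follows essentially the same route as the paper: the paper's own proof simply invokes Corollary \ref{cor: Einstein gK_conditions} to reduce to the two cases \eqref{eq:Einstein-gK} ($\zeta\neq 0$) and \eqref{eq:3.2a} ($\zeta=0$, $\eta>0$), then applies the maximum-principle/Bochner analysis of Remark \ref{rem:gK non positive lambda} for $\lambda>0$ and the trace-and-integrate identities of Remark \ref{rem:Einstein gK equations} for the upper bound and the principal-eigenvalue characterization. Your write-up just makes explicit the steps the paper delegates to those remarks.
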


\begin{proof}
It is sufficient to note that by Corollary \ref{cor: Einstein gK_conditions},
either \eqref{eq:Einstein-gK} or \eqref{eq:3.2a} are verified and apply Remark \ref{rem:gK non positive lambda}.
\end{proof}

\begin{rem}
Note that in Theorem \ref{thm:range of lambda} we obtain a range for $\lambda$, while in Theorem \ref{Thm-productBase} we give an example where the total scalar curvature of the base is positive and there are no
Einstein \gK ~for that base.
\end{rem}

\begin{rem}\label{rem:Mustafa type} At this point we connect our Theorem \ref{thm:range of lambda} with a hypothesis in the lines of \cite{Mustafa2005}, namely.

\noindent Let $(B^n,g_B)$ be a compact Riemannian manifold. If  $B^n \times_{\vect{\varphi}^{\vect{p}}} \vect{F}$ is a $\gK$ Einstein manifold with $\Ric=\lambda g$ for some $\lambda\in \mathbb{R}$. Let us assume $\zeta \neq 0$.

\begin{enumerate}
  \item If $\tau_B \leq \lambda n$, then by \eqref{eq:S_B} and Bochner lemma $\varphi$ is constant, and as consequence $B^n \times_{\vect{\varphi}^{\vect{p}}} \vect{F}$ is trivial and $\tau_B = \lambda n$
  $\displaystyle \lambda \geq \frac{1}{n|B|}\int_B \tau_B \,dv_{g_B}$.

  \item If $\tau_B \geq \lambda n$, then by \eqref{eq:S_B} and Bochner lemma $\varphi$ is constant, and as consequence $B^n \times_{\vect{\varphi}^{\vect{p}}} \vect{F}$ is trivial and $\tau_B = \lambda n$
  $\displaystyle \lambda n \leq \frac{1}{|B|}\int_B \tau_B \,dv_{g_B}$, but the latter is true by \eqref{eq:eigenvalue gK}.
\end{enumerate}

\noindent In both cases, Remark \ref{rem:trivial gK} implies that $\displaystyle \lambda n = \frac{1}{|B|}\int_B \tau_B \,dv_{g_B}$.
We observe that Theorem \ref{thm:range of lambda} is in some sense a stronger result.

\noindent Combining both results,
if  $B^n \times_{\vect{\varphi}^{\vect{p}}} \vect{F}$ is a non trivial $\gK$ Einstein manifold with $\Ric=\lambda g$ for some $\lambda\in \mathbb{R}$, then $\min_B \tau_B  <\lambda n < \max_B \tau_B$ and
$\lambda n \in (\min_B \tau_B  , \max_B \tau_B)\cap(0,\sigma_B)$, where $\displaystyle\sigma_B:=\frac{1}{|B|}\int_B \tau_B \,dv_{g_B}$ $\leq \max_B \tau_B$.
\end{rem}


\bigskip

\section{Conclusions and future developments}
\label{sec:conclusions}

In this work, we have provided a comprehensive set of conditions under which non-trivial Einstein multiply warped products, particularly those of generalized Kasner type, can exist. We have also derived estimates on the Einstein parameter that serve as key criteria for the existence of these metrics. In the following, we discuss the implications of our results and outline potential avenues for further research.

\medskip

\begin{rem}
Let us assume \eqref{Einst-mwp-a} and \eqref{Einst-mwp-b}, where $\lambda $ is a constant, then by Theorem \ref{Theor: DR}
\begin{subequations}
\begin{align}
&\lambda \left(n +\sum_{i=1}^m s_i\right) - \tau = \sum_{i=1}^m s_i \dfrac{1}{b_i^2} A_i = \label{eq:tau modified-a}\\
%
%
& \sum_{i=1}^m s_i \left[ \left(
\sum_{\begin{subarray}{l}k=1\end{subarray}}^m
s_k  g_{B}\left(\dfrac{\grad_{B}b_i}{b_i},\frac{\grad_{B}b_k}{b_k}\right)+
\frac{\Delta_{B}b_i}{b_i} -
\frac{\| \grad_{B}b_i \|_{B}^{2}}{b_i^2}{\frac{}{}}
+ \lambda\right) -
\mu_i \frac{1}{b_i^2} \right]
.\label{eq:tau modified-c}
\end{align}
\end{subequations}
So,
taking the differential of \eqref{eq:tau modified-a}/\eqref{eq:tau modified-c}, since $d \lambda =0$, we have
\begin{align}
d\tau= -
\sum_{i=1}^m s_i d\left[
\sum_{\begin{subarray}{l}k=1\end{subarray}}^m
s_k  g_{B}\left(\dfrac{\grad_{B}b_i}{b_i},\frac{\grad_{B}b_k}{b_k}\right)+
\frac{\Delta_{B}b_i}{b_i} -
\frac{\| \grad_{B}b_i \|_{B}^{2}}{b_i^2}{\frac{}{}}
 - \mu_i \frac{1}{b_i^2} \right].
\end{align}
\end{rem}

From this remark, we obtain some straightforward consequences, which are listed below:
\begin{itemize}[label=\checkmark]
\item $\tau$  is constant if and only if
$\sum_{i=1}^m s_i \dfrac{1}{b_i^2} A_i$ is constant.
  \item $\tau = \lambda \left( n + \sum_{i=1}^m s_i \right)$ if and only if $\sum_{i=1}^m s_i \dfrac{1}{b_i^2} A_i = 0$.
  \item if $m=1$, then \2$\tau = \lambda (n + s_1) $ if and only if the warped product $B \times_{b_1} F_1$ is Einstein", as a consequence of Corollary \ref{cor: 2 Einstein_m}.
\end{itemize}
In what follows, we will analyze deeper implications in some particular cases.

\subsection{Case \texorpdfstring{$m=1$}{Lg}, i.e. singly warped product}
  In this situation \eqref{eq:d-tau-m} takes the form
  \begin{align}\label{eq:d-tau-m=1}
&d \tau =
2  s_1 \dfrac{1}{b_1^3}\left[
b_1^2 \left(
 (s_1 - 1)
\frac{\| \grad_{B}b_1 \|_{B}^{2}}{b_1^2} +\frac{\Delta_{B}b_1}{b_1}
+ \lambda\right) -
\mu_1 \right]db_1,
\end{align}
but, by \eqref{rem:Pal-Kumar-results-m=1} we have
\begin{align}\label{rem:Pal-Kumar-results-m1-gK}
b_1^2 \left[
(s_1 - 1)\dfrac{|\grad_B  b_1|^{2}}{b_1^2} + \dfrac{\Delta b_1}{b_1}
+ \lambda
\right] = C \textrm{ constant}.
\end{align}
Hence, \eqref{eq:d-tau-m=1} takes the form
\begin{align}\label{eq:d-tau-m=1bis}
&d \tau =
2  s_1 \dfrac{1}{b_1^3}\left[
C -
\mu_1 \right]db_1,
\end{align}
and then

\begin{itemize}[label=\checkmark]
  \item $\tau $ is constant if and only if $C=\mu_1$ or $b_1$ is constant.
\end{itemize}

\subsection{\mwp ~with compact Riemannian base}
Now, let us assume \eqref{Einst-mwp-a} and \eqref{Einst-mwp-b}, where $\lambda $ is a constant, and consider $(B^n,g_B)$ a compact Riemannian manifold. Then, taking the differential of \eqref{eq:tau modified-a}
 on $(B^n,g_B)$ and multiplying by $-1$, we get

\begin{align*}
d \tau
&=\sum_{i=1}^m s_i 2\dfrac{1}{b_i^3}\left[
b_i^2 \left(
\sum_{\begin{subarray}{l}k=1\end{subarray}}^m
s_k  g_{B}\left(\dfrac{\grad_{B}b_i}{b_i},\frac{\grad_{B}b_k}{b_k}\right)+
\frac{\Delta_{B}b_i}{b_i} -
\frac{\| \grad_{B}b_i \|_{B}^{2}}{b_i^2}{\frac{}{}}
+ \lambda\right) -
\mu_i \right]db_i \\
& -\sum_{i=1}^m s_i \dfrac{1}{b_i^2}
d\left[
b_i^2 \left(
\sum_{\begin{subarray}{l}k=1\end{subarray}}^m
s_k  g_{B}\left(\dfrac{\grad_{B}b_i}{b_i},\frac{\grad_{B}b_k}{b_k}\right)+
\frac{\Delta_{B}b_i}{b_i} -
\frac{\| \grad_{B}b_i \|_{B}^{2}}{b_i^2}{\frac{}{}}
+ \lambda\right) \right] ,
\end{align*}
but the last addition is $0$ by \eqref{eq:thm1-2-bis}. Therefore, considering \eqref{eq:A_i}, there results
\begin{align}\label{eq:d-tau-m}
&d \tau =
2 \sum_{i=1}^m s_i \dfrac{1}{b_i^3}A_idb_i.
&
\end{align}
From this computations, we obtain
\begin{itemize}[label=\checkmark]
    \item $\tau$ is constant if and only if
$\sum_{i=1}^m s_i \dfrac{1}{b_i^3}A_idb_i=0$.
\end{itemize}

\subsection{Generalized Kasner manifold with compact Riemannian base}
 Let $(B^n,g_B)$ be a compact Riemannian manifold. Consider $b_i = \varphi^{p_i}$, where $\varphi \in C^\infty_{>0} (B)$ and let us assume \eqref{Einst-mwp-a} and
\eqref{Einst-mwp-b}. Then,  as shown previously, \eqref{eq:tau modified-a}/\eqref{eq:tau modified-c} is verified and takes the form
\begin{align}
\lambda \left(n +\sum_{i=1}^m s_i\right) - \tau &=
  \frac{\Delta_B \varphi^\zeta}{\varphi^\zeta}+
  \lambda \sum_{i=1}^m s_i
  - \sum_{i=1}^m s_i \mu_i \frac{1}{\varphi^{2p_i}},
  \nonumber
\end{align}
where $\zeta = \sum_{i=1}^m s_i p_i$.
Then, taking the differential in the latter, we get
\begin{align}\label{eq:dtau-gK}
d \tau =- d\left (\frac{\Delta_B \varphi^\zeta}{\varphi^\zeta}\right)+ \sum_{i=1}^m s_i \mu_i d(\varphi^{-2p_i}).
\end{align}

On the other hand, in this context, \eqref{eq:thm1-2-bis} takes the form
\begin{align*}
0=\sum_{k=1}^m\frac{s_k}{2 b_k^{2}}
d\left\{
b_k^2 \left[\sum_{j=1}^m s_j g_{B}\left(\frac{\grad_{B}b_k}{b_k},\frac{\grad_{B}b_j}{b_j}\right)
+ \frac{\Delta b_k}{b_k} - \frac{|\grad_B  b_k|^{2}}{b_k^2}
+ \lambda
\right]
\right\} (X).
\end{align*}
\begin{align*}
0=\frac{1}{2}\sum_{k=1}^m s_k\frac{1}{\varphi^{2p_k}}
d
&\left\{
\varphi^{2p_k}
\left[\sum_{j=1}^m s_j p_k p_j \frac{\| \grad_{B}\varphi \|_{B}^{2}}{\varphi^2}
+p_k \left(\frac{\Delta_B \varphi}{\varphi}+(p_k-1)\frac{\Vert   \grad_B \varphi \Vert_B^2}{\varphi^2} \right)
\right.\right.\\
&\left.\left.
\qquad\quad- p_k^2\frac{\| \grad_{B}\varphi \|_{B}^{2}}{\varphi^2}
+ \lambda \right]
\right\}(X).
\end{align*}
Applying \eqref{eq:zeta-eta} and \eqref{eq:Lap2}, we obtain
\begin{align*}
0=\frac{1}{2}\sum_{k=1}^m s_k\frac{1}{\varphi^{2p_k}}
d\left\{
\varphi^{2p_k}p_k \left[(\zeta - 1)\frac{\Vert   \grad_B \varphi \Vert_B^2}{\varphi^2} + \frac{\Delta_B \varphi}{\varphi}\right]+\varphi^{2p_k} \lambda
\right\}
\end{align*}

\begin{align*}
0=\frac{1}{2}\sum_{k=1}^m s_k\frac{1}{\varphi^{2p_k}}
d\left\{
\varphi^{2p_k}p_k \frac{1}{\zeta}\frac{\Delta_B \varphi^\zeta}{\varphi^\zeta}+\varphi^{2p_k} \lambda
\right\}, \textrm{ if } \zeta\neq 0.
\end{align*}

\begin{align*}
0=\frac{1}{2}\sum_{k=1}^m s_k\frac{1}{\varphi^{2p_k}}
d\left\{
\varphi^{2p_k} \left(  p_k \frac{1}{\zeta}\frac{\Delta_B \varphi^\zeta}{\varphi^\zeta}+ \lambda \right)
\right\},
\end{align*}
which is equivalent, after some computations, to


\begin{align}\label{eq:dtau-gK-laplacian}
0=2\frac{\eta}{\zeta} \frac{1}{\varphi} \frac{\Delta_B \varphi^\zeta}{\varphi^\zeta}d \varphi + 2 \zeta \frac{1}{\varphi}\lambda  d\varphi +
d\left(\frac{\Delta_B \varphi^\zeta}{\varphi^\zeta} \right).
\end{align}

\medskip

Hence, applying \eqref{eq:dtau-gK-laplacian} in \eqref{eq:dtau-gK}, there results
\begin{align} \label{eq:dtau-gK-final}
&d \tau =2\frac{\eta}{\zeta} \frac{1}{\varphi} \frac{\Delta_B \varphi^\zeta}{\varphi^\zeta}d \varphi + 2 \zeta \frac{1}{\varphi}\lambda  d\varphi- \sum_{i=1}^m s_i \mu_i 2p_i \varphi^{-2p_i -1}d \varphi.
\end{align}

\begin{align*} 
&d \tau =\left[\frac{\eta}{\zeta}  \frac{\Delta_B \varphi^\zeta}{\varphi^\zeta} +  \zeta \lambda  - \sum_{i=1}^m s_i \mu_i p_i \varphi^{-2p_i }\right]2\frac{1}{\varphi}d\varphi.
\end{align*}

\begin{align*} 
&d \tau =\zeta\left[\frac{\eta}{\zeta^2}  \frac{\Delta_B \varphi^\zeta}{\varphi^\zeta} + \lambda
- \frac{1}{\zeta}\sum_{i=1}^m s_i  p_i \mu_i \varphi^{-2p_i }\right]2\frac{1}{\varphi}d\varphi.
\end{align*}

Since $\zeta \neq 0$, we conclude that
\begin{itemize}[label=\checkmark]
    \item $\tau $ is constant if and only if
\begin{align*}
&0=\frac{\eta}{\zeta^2}  \frac{\Delta_B \varphi^\zeta}{\varphi^\zeta} + \lambda
- \frac{1}{\zeta}\sum_{i=1}^m s_i  p_i \mu_i \varphi^{-2p_i } \textrm{ or }\varphi \textrm{ is constant}.
\end{align*}
\end{itemize}









Now, recall that the $b_i = \varphi^{p_i} $ must verify
\begin{subequations}
\begin{align}
			\Ric_{B}&=\lambda g_{B}+\sum_{i=1}^m
			\frac{s_i}{b_i}{\rm H}_{B}^{b_i} \label{hess-a}\\
                    &=\lambda g_{B}+\sum_{i=1}^m
			\frac{s_i}{\varphi^{p_i}}{\rm H}_{B}^{\varphi^{p_i}} \label{hess-b}\\
                    &=\lambda g_{B}+ \beta \frac{{\rm H}_{B}^{\varphi^{\frac{1}{\alpha}}}} {\varphi^{\frac{1}{\alpha}}},\label{hess-c}
\end{align}
\end{subequations}
where $\beta= \dfrac{\zeta^2}{\eta} \textrm{ and } \alpha= \dfrac{\zeta}{\eta}$, see \cite{dobarro-unal2008}.

\bigskip\bigskip

On the other hand, taking the $g_B$-trace in the latter equations
\begin{subequations}\label{eq:eigen-abc}
\begin{align}
			\tau_{B}&=\lambda n +\sum_{i=1}^m
			          \frac{s_i}{b_i}{\Delta}_{B}{b_i} \label{eigen-a}\\
	                &=\lambda n +\sum_{i=1}^m
			          s_i \frac{ \Delta_B \varphi^{p_i} }{\varphi^{p_i}} \label{eigen-b}\\
                    &=\lambda n + \beta \frac{\Delta_B \varphi^{\frac{1}{\alpha}}}{\varphi^{\frac{1}{\alpha}}},\label{eigen-c}
            %
\end{align}
\end{subequations}
for the same $\alpha$ and $\beta$.
Note that the equations \eqref{eq:eigen-abc} are strongly related, to the linear eigenvalue problem
\begin{align}\label{eq:principal eigenvalue}
-\beta \Delta_B \psi + \tau_B \psi = \nu \psi \textrm{ on } B.
\end{align}
Indeed, if $(B^n,g_B)$ is compact and Riemannian, we know that if $\nu_1$ is the principal eigenvalue of \eqref{eq:principal eigenvalue}, then all the corresponding eigenfunctions are multiples of one fixed eigenfunction, that is, the dimension of the eigenspace is one, and all the eigenfunctions do not change sign.
Let $\psi > 0$ be such an eigenfunction.

Thus, in our case,  $\lambda n$ must be $\nu_1$ and $\varphi^{\frac{1}{\alpha}} = t \psi$ for some $t \in ]0,+\infty[$.

If $\varphi = \psi^\alpha$ verify the Hessian condition \eqref{hess-c}, and consequently \eqref{eigen-c}, we  then it also needs to satisfy
 \eqref{eq:dtau-gK-final} in order to ensure that
$\tau$ is constant, i.e.
\begin{align}\label{eq:gKa}
\psi \Ric_B = \lambda \psi g_{B} + \beta {\rm H}_{B}^{\psi} \quad  \Rightarrow \quad \tau_B \psi = \underbrace{\lambda n}_{\nu_1} \psi + \Delta_B \psi
\end{align}
and
\begin{align*}
&0=\frac{\eta}{\zeta^2}  \frac{\Delta_B \varphi^\zeta}{\varphi^\zeta} + \lambda
- \frac{1}{\zeta}\sum_{i=1}^m s_i  p_i \mu_i \varphi^{-2p_i },
\end{align*}
where $\beta= \dfrac{\zeta^2}{\eta} \textrm{ and } \alpha= \dfrac{\zeta}{\eta}$. Hence, multiplying the last equation by $\beta$ and changing the variable, we obtain

\begin{align} \label{eq:gKb}
&0=  \frac{\Delta_B \psi^\beta}{\psi^\beta} + \lambda \beta
- \alpha\sum_{i=1}^m s_i  p_i \mu_i \psi^{-2p_i \alpha},
\end{align}
%
%
or equivalently,
\begin{align*} 
&-\Delta_B \psi^\beta - \lambda \beta \psi^\beta =
- \alpha\sum_{i=1}^m s_i  p_i \mu_i \psi^{\beta(1-2p_i \frac{\alpha}{\beta})}.
\end{align*}

We are interested in pairs
 $(g_B,\psi)$ with $\psi >0$ that satisfy \eqref{eq:gKa} and \eqref{eq:gKb}, where the values $s_i$, $p_i$ and $\mu_i$ are given. Then, we would have a $\gK$  verifying \eqref{eq:gKa} with constant scalar curvature.
%
 However, demonstrating this is beyond the scope of the current work and is left for future research.

\medskip

As a final question, we would like to study the following case in the context of a singly $\gK$ manifold:
\subsection{What happens if \texorpdfstring{$m=1$}{Lg} and \texorpdfstring{$p_1 \neq 0$}{Lg}?} In this case, $\zeta =s_1p_1 \neq 0$ and $\eta= s_1 p_1^2 >0$, then we have
\begin{equation*}
\beta = \frac{\zeta^2}{\eta}=\frac{s_1^2 p_1^2}{s_1 p_1^2}=s_1\quad \text{and }\quad \alpha= \frac{\zeta}{\eta}=\frac{s_1p_1}{s_1 p_1^2}=\frac{1}{p_1}.
\end{equation*}
Thus \eqref{eq:dtau-gK-final} becomes
\begin{align*} 
&0=\frac{1}{s_1}  \frac{\Delta_B \varphi^{s_1p_1}}{\varphi^{s_1p_1}} + \lambda
-   \mu_1 \varphi^{-2p_1 }.
\end{align*}
Taking $\psi=\varphi^{\frac{1}{\alpha}}=\varphi^{p_1}$, we get
\begin{align*} 
&0= \Delta_B \psi^{s_1} + \lambda s_1 \psi^{s_1}
- s_1  \mu_1 \psi^{s_1 -2 },
\end{align*}
so that
\begin{align} \label{eq:tau-constant-m=1}
&-\Delta_B \psi^{s_1} +(- \lambda s_1) \psi^{s_1} =
- s_1  \mu_1 (\psi^{s_1})^{1 -\frac{2}{s_1} }.
\end{align}
In order to deal with Besse's conjecture, by using Lemma \ref{lem: lambda estimates mwp} and the non-existence result for non-trivial Einstein singly warped products with \(\lambda \leq 0\), we focus on the cases where \(\lambda > 0\) and \(\mu_1 > 0\). Therefore, we seek positive solutions to \eqref{eq:tau-constant-m=1} and \eqref{eq:gKa}, that is
 $$
\left\{ \begin{array}{rcl}
-\Delta_B \psi^{s_1} +(- \lambda s_1) \psi^{s_1} &=&
- s_1  \mu_1 (\psi^{s_1})^{1 -\frac{2}{s_1} }\\
\medskip
\psi \Ric_B &=& \lambda \psi g_{B} + \beta {\rm H}_{B}^{\psi} \quad \Rightarrow \quad \tau_B \psi = \underbrace{\lambda n}_{\nu_1} \psi + \Delta_B \psi.\\
\end{array} \right.
 $$
Note that \eqref{eq:tau-constant-m=1} is a nonlinear elliptic equation in $\psi^{s_1}$, which admits the constant
$\gamma = \left(\frac{\mu_1}{\lambda}\right)^{\frac{s_1}{2}}$ as positive solution.
If $\psi := \gamma $ were the unique positive solution of \eqref{eq:tau-constant-m=1} and $\psi$  were a solution of \eqref{eq:gKa}, then $(B^n,g_B)$ would be Einstein  manifold with $\Ric_B= \lambda g_B$. This leads us to analyse the following problem:

\medskip

\begin{description}
  \item[Problem] Let $(B,g_B)$ be a compact Riemannian manifold of dimension $n\geq 3$ without boundary with scalar curvature $\tau_B$ and total scalar curvature $\displaystyle \tau_0 = \frac{1}{|B|}\int_B \tau_B >0$ and let $ \omega \in (0,1)$ and $\mu>0$. Does $a\geq0$ exist such that if $\rho \in \left(a, \dfrac{\tau_0}{n}\right)$,
       the problem
      \begin{equation}\label{eq:conjecture}
        -\Delta_B u = \rho u - \mu u^\omega
      \end{equation}
      has a unique positive solution?
\end{description}

\medskip

To our knowledge, this remains an open problem in the field of geometric analysis. The number of positive solutions of equation \eqref{eq:conjecture} for $p>1$ with Dirichlet conditions on a bounded domain of $\mathbb{R}^n$ was studied by Ambrosetti in \cite{Ambrosetti-Mancini1979,Ambrosetti2011}. On the other hand, the problem the uniqueness of positive solution under Neumann conditions was studied by Ruf in \cite{Ruf} (Falta la cita). The author show that there is only one positive solution for $\lambda_1 < \rho <\lambda_2$, where $\lambda_1,\lambda_2$ are the eigenvalues of the Laplacian.No studies have been found regarding this equation within a Riemannian framework.

\medskip

The significance of this problem must not be overlooked. Suppose that the answer is affirmative. Then, given an Einstein warped product of the form $B \times_{\varphi^{p_1}} F_1$ with $a<\lambda <\dfrac{\tau_0}{n}$, $\psi = \varphi^{p_1}$ must satisfy both equations \eqref{eq:gKa} and \eqref{eq:tau-constant-m=1}.
Since the latter can only have constant positive solutions, $\varphi$ is a positive constant. Consequently, the warped product would be trivial, and the base is also an Einstein manifold. As a consequence, Besse's conjecture would be solved for $\rho \in \left(a,\dfrac{\tau_0}{n}\right)$.

The above reasoning proves the following proposition.
\begin{prop}
Let a \gK ~$B\times_{\vect{\varphi^p}}\vect{F}$ with compact base $(B,g_B)$ of dimension $n$ with $\zeta \neq 0$.
\begin{description}
  \item[i]
If $\Ric = \lambda g$ with $\lambda >0$, then $u:=\varphi^\zeta $ verifies
    \begin{align} \label{eq:condition}
        &-\Delta_B u - \lambda \beta u =
        - \alpha\sum_{i=1}^m s_i  p_i \mu_i u^{1-2p_i \frac{\alpha}{\beta}}.
    \end{align}
    \item[ii] If \eqref{eq:condition} has only one positive solution $u$, and if the \gK ~$B\times_{\vect{\varphi^p}}\vect{F}$
    verifies $\Ric = \lambda g$ with $\lambda >0$, then it holds $\varphi = u^{\frac{1}{\zeta}}$, that is, the \gK ~$B\times_{\vect{\varphi^p}}\vect{F}$  is a trivial Einstein manifold.
    Moreover, $\varphi$ is the $\varphi_0$ defined in Remark \ref{rem:trivial gK} and $\varphi^\zeta$ verifies \eqref{eq:condition}.
\end{description}
\end{prop}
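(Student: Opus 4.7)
The strategy is to derive part (i) from a single trace-like combination of the fiber equations supplied by Corollary~\ref{cor: Einstein gK_conditions}, and to obtain part (ii) by verifying that the constant $\varphi_0^\zeta$ of Remark~\ref{rem:trivial gK} is itself a positive solution of \eqref{eq:condition}, so that the uniqueness hypothesis collapses $\varphi^\zeta$ onto it.

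For part (i), I would start from the Einstein condition \eqref{eq:Einstein conditions gK f}, applicable because $\zeta\neq 0$, namely
\[
\lambda\,\varphi^{2p_i}=\mu_i-\frac{p_i}{\zeta}\,\frac{\Delta_B\varphi^\zeta}{\varphi^\zeta}\,\varphi^{2p_i}\qquad(i=1,\dots,m),
\]
multiply the $i$-th equation by $s_ip_i\varphi^{-2p_i}$, and sum over $i$. The structural identities $\sum_i s_ip_i=\zeta$ and $\sum_i s_ip_i^2=\eta$ collapse the sum to
\[
\lambda\,\zeta+\frac{\eta}{\zeta}\,\frac{\Delta_B\varphi^\zeta}{\varphi^\zeta}=\sum_{i=1}^m s_ip_i\,\mu_i\,\varphi^{-2p_i}.
\]
Multiplying through by $\alpha=\zeta/\eta$ (so that $\alpha\zeta=\beta$) and then by $u=\varphi^\zeta$, and noting that $\alpha/\beta=1/\zeta$ yields $\varphi^{\zeta-2p_i}=u^{1-2p_i\alpha/\beta}$, one lands exactly on \eqref{eq:condition}.

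For part (ii), by (i) the function $u=\varphi^\zeta$ is a positive solution of \eqref{eq:condition}. I would then verify that the constant $u_0:=\varphi_0^\zeta$, with $\varphi_0>0$ the scalar of Remark~\ref{rem:trivial gK} characterised by $\varphi_0^{2p_i}=\mu_i/\lambda$ for every $i$, is also a positive solution: substituting $\mu_i=\lambda\varphi_0^{2p_i}$ and $u_0^{1-2p_i\alpha/\beta}=\varphi_0^{\zeta-2p_i}$ into the right-hand side of \eqref{eq:condition} turns it into $-\alpha\lambda u_0\sum_i s_ip_i=-\alpha\zeta\lambda u_0=-\lambda\beta u_0$, matching the left-hand side. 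The uniqueness hypothesis then forces $\varphi^\zeta=u_0$, so $\varphi\equiv\varphi_0$ is constant and the \gK{} is the trivial Einstein manifold of Remark~\ref{rem:trivial gK}. The main obstacle is the tacit existence of $\varphi_0$: the compatibility $\varphi_0^{2p_i}=\mu_i/\lambda$ must hold with a single positive $\varphi_0$ simultaneously in every $i$; under $\lambda>0$, Lemma~\ref{lem: lambda estimates mwp}(iii) ensures each $\mu_i>0$ so that $(\mu_i/\lambda)^{1/(2p_i)}$ is well defined when $p_i\neq 0$, but matching across $i$ is a nontrivial constraint on the data $(s_i,p_i,\mu_i,\lambda)$ which the statement of (ii) implicitly imposes.
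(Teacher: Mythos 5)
Your derivation of part (i) is correct and is more direct than the paper's. You combine the fiber equations \eqref{eq:Einstein conditions gK e} (in their $\zeta\neq 0$ form) with weights $s_ip_i\varphi^{-2p_i}$ and use $\sum_i s_ip_i=\zeta$, $\sum_i s_ip_i^2=\eta$ to land on \eqref{eq:condition} in a few lines; this uses nothing beyond the Einstein condition and, in particular, does not need compactness of $B$. The paper instead reaches the same identity through the scalar-curvature route: it expresses $\lambda(n+\sum_i s_i)-\tau$ in the generalized Kasner setting, differentiates, invokes the Bianchi-type identity of Theorem \ref{Thm: PalKu} to produce \eqref{eq:dtau-gK-laplacian}, and reads off \eqref{eq:gKb} from $d\tau=0$. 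Your computation is the cleaner of the two and isolates exactly where $\Ric=\lambda g$ enters; the paper's detour has the advantage of exhibiting the link between \eqref{eq:condition} and constancy of $\tau$ under the weaker hypotheses \eqref{Einst-mwp-a}--\eqref{Einst-mwp-b}, which is the theme of Section \ref{sec:conclusions}.

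For part (ii) you follow the paper's intended logic (a constant positive solution exists, so uniqueness forces $u=\varphi^\zeta$ to coincide with that constant, whence $\varphi$ is constant and Remark \ref{rem:trivial gK} applies), and you are right to flag the existence of the constant competitor as the delicate point. Two remarks. First, your candidate $u_0=\varphi_0^{\zeta}$ with $\varphi_0^{2p_i}=\mu_i/\lambda$ for every $i$ is more than is needed: any positive root $t$ of the single scalar equation $\lambda\zeta=\sum_i s_ip_i\mu_i\,t^{-2p_i}$ yields a constant solution of \eqref{eq:condition}, and such a root exists by the intermediate value theorem whenever, e.g., all $p_i>0$ (recall that $\lambda>0$ forces $\mu_i>0$ by Lemma \ref{lem: lambda estimates mwp}). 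Second, the per-$i$ compatibility $\varphi^{2p_i}=\mu_i/\lambda$ need not be imposed a priori: once $\varphi$ is known to be constant, it follows from the full fiber system \eqref{eq:Einstein conditions gK e}, which is how one recovers the $\varphi_0$ of Remark \ref{rem:trivial gK} at the end. With that reordering, the obstacle you identify disappears and your argument closes.
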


\begin{rem}
    Notice that if $m=1$, then \eqref{eq:condition} is \eqref{eq:conjecture}, with $\rho = \lambda s_1 $ and $\omega = 1-\frac{2}{s_1}.$
\end{rem}

\bigskip

\end{document}